\def\eq#1{(\ref{#1})}
\def\nn{\nonumber}
\def\({\left(\begin{array}{cccccc}}
\def\){\end{array}\right)}
\def\com#1{\quad{\textrm{#1}}\quad}
\def\eq#1{(\ref{#1})}
\def\nn{\nonumber}
\def\({\left(\begin{array}{cccccc}}
\def\){\end{array}\right)}
\def\bes{\begin{eqnarray}}
\def\ees{\end{eqnarray}}
\newcommand{\ba}{\overset{\raisebox{0pt}[0pt][0pt]{\text{\raisebox{-.5ex}{\scriptsize$\leftharpoonup$}}}}}
\newcommand{\fa}{\overset{\raisebox{0pt}[0pt][0pt]{\text{\raisebox{-.5ex}{\scriptsize$\rightharpoonup$}}}}}
\newcommand{\beq}{\begin{equation}}
\newcommand{\eeq}{\end{equation}}
\newcommand{\bea}{\begin{eqnarray}}
\newcommand{\eea}{\end{eqnarray}}
\newcommand{\beann}{\begin{eqnarray*}}
\newcommand{\eeann}{\end{eqnarray*}}
\newcommand{\ve}{\varepsilon}
\newcommand{\bp}{\begin{proof}}
\newcommand{\ep}{\end{proof}}
\newtheorem{theorem}{Theorem}[section]
\newtheorem{corollary}[theorem]{Corollary}
\newtheorem{lemma}[theorem]{Lemma}
\newtheorem{definition}[theorem]{Definition}
\newtheorem{remark}[theorem]{Remark}
\numberwithin{equation}{section}
\begin{document}

\title[Lower bound of density for isentropic gas]
{Lower bound of density for Lipschitz continuous solutions in the isentropic gas dynamics}
\author{Geng Chen}
\address{Geng Chen, School of Mathematics,
Georgia Institute of Technology, Atlanta, GA 30332 USA ({\tt gchen73@math.gatech.edu})}

\author{Ronghua Pan}
\address{Ronghua Pan, School of Mathematics,
Georgia Institute of Technology, Atlanta, GA 30332 USA ({\tt panrh@math.gatech.edu})}

\author{Shengguo Zhu}
\address{Shengguo Zhu, Shanghai Jiaotong University, Shanghai, China and Georgia Institute of Technology, Atlanta, GA 30332 USA  ({\tt zhushengguo@sjtu.edu.cn})}

\date{\today}

\begin{abstract} For the Euler equations of isentropic gas dynamics in one space dimension, also knowns as p-system in Lagrangian coordinate, it is known that the density can be arbitrarily close to zero as time goes to infinity, even when initial density is uniformly away from zero.  
In this paper, for uniform positive initial density, we prove the density in any Lipschitz continuous solutions for
Cauchy problem has a sharp positive lower bound
in the order of $O(\frac{1}{1+t})$, which is  identified by explicit examples in \cite{courant}. 
\end{abstract}

\maketitle
Key words: Gas dynamics, singularity formation, vacuum, large data.\bigskip

MSC 2010: 76N15, 35L65, 35L67

\section{Introduction}
In this paper we consider the Cauchy problem for isentropic gas dynamics
in the Lagrangian coordinate
	\beq\label{ps}\left\{
		\begin{array}{ll}
		&{v}_t-u_x = 0\,\vspace{.1cm}\\
		&u_t+p({v})_x =0\,,\vspace{.1cm}\\
		&v(x,0)=v_0(x),\quad u(x,0)=u_0(x)\,,
		\end{array}\right.
	\eeq
where  the specific volume ${v}=1/\rho$, the density  $\rho>0$ and the velocity $u$ of the gas
are all functions on $(x,t)\in{\mathbb R}\times{\mathbb R}^+$. The pressure $p({v})$
satisfies
\beq\label{pga}
p({v})=K{v}^{-\gamma} \com{with} \gamma>1\,.
\eeq
This system is also called the p-system. The Lipschitz continuous solution of \eqref{ps} is equivalent to the solution in Eulerian coordinate \cite{wagner}.
Let
\[
c=\sqrt{-{p}'({v})}\,.
\]
be the sound speed. The Riemann invariants $s$ and $r$ are defined as
\beq\label{s_r_eq}
s:=u-\phi \qquad r:=u+\phi
\eeq
with
\beq\label{xi_eq}
 \phi\equiv\phi({v}):=\int^{{v}}_{1} \sqrt{-{p}'({v})}\,d{v}\,.
\eeq
For smooth solutions, $s$ and $r$ satisfy
\[
s_t+cs_x=0\qquad r_t-cr_x=0\,.
\]

Toward a large date theory, such as the existence of BV solutions
for isentropic Euler equations (\ref{ps}), one of the main challenges is the possible
degeneracy when density approaches vacuum.
When the solution approaches vacuum, it causes major difficulties in analyzing the large data solutions for \eqref{ps}, 
because  \eqref{ps} loses its strict hyperbolicity when $\rho=0$. See \cites{BCZ,CJ,ls} for analysis and examples showing 
these difficulties.
Therefore, sharp information on the time decay of density lower bound is critical in the study of compressible Euler 
equations.

It is well known for \eqref{ps} that the density can be arbitrarily close to zero as time goes to infinity, even when initial density is uniformly away from zero, such as in the interaction of two strong rarefaction waves,
c.f. \cites{courant,Riemann}. In fact, the study of interaction between two rarefaction waves
can be found in Riemann's pioneer paper \cite{Riemann} in 1860.  By studying Riemann's 
construction, when $\gamma=\frac{2N+1}{2N-1}$ with any positive integer $N$, Lipschitz 
continuous examples were provided in Section 82 in \cites{courant}, in which density functions are proved to decay to 
zero in an order of $O(1+t)^{-1}$. For reader's convenience, a relative detailed discussion can be found in Section 2.
The main result we show in this paper is that when $1<\gamma<3$, in any Lipschitz 
continuous solutions, density has a sharp positive lower bound in the order of $O(1+t)^{-1}$.

Due to the elegant structure of Euler equations,
local behavior of Lipschitz continuous solution can be classified into two classes: compression and rarefaction,
defined below.
There have been many efforts put on this problem \cites{G3,G6,lax2,youngblake1}.
 \begin{definition}\label{def1}
At any point on $(x,t)$-plane, the smooth solution is forward (resp. backward) rarefaction {\em$\fa R$ ($\ba R$)} if and only if $s_x\geq0$ {\em(}reap. $r_x\geq0${\em)} at that point;
forward {\em(}resp. backward{\em)} compressive {\em$\fa C$ ($\ba C$)} if and only if $s_x<0$ {\em(}reap. $r_x<0${\em)} at that point.
\end{definition}

Among many results, two of them are closely related to current paper. 
For rarefactive piecewise Lipschitz continuous solutions,  Longwei Lin proves that the density has a $O(1+t)^{-1}$ lower bound in \cite{lin2}. For general smooth solutions, in a very recent paper  \cite{CPZ}, we find a $O(1+t)^{-4/(3-\gamma)}$ lower bound when $1<\gamma<3$, using which together with Lax's decomposition in \cite{lax2}, we prove that gradient blowup of $u$ and/or ${v}$ happens in finite time if and only if the initial data are forward or backward compressive somewhere. 
The second result is further extended in \cite{CPZ} to full (nonisentropic) Euler equations.

In this paper,  for general Lipschitz continuous solution of (\ref{ps}) when $1<\gamma<3$, we improve the lower bound on density from
$O((1+t)^{-4/(3-\gamma)})$  to the optimal order of $O((1+t)^{-1})$. Based on this result, we improve the estimate in \cite{CPZ} on the life-span for classical solution of \eqref{ps} including compression.

\bigskip

Our main theorem is

\begin{theorem}{\label{main0}}
Assume that initial data $s_0(x)=s(x,0)$ and $r_0(x)=r(x,0)$ are Lipschitz continuous
functions on $x$.
Furthermore, assume that ${v}(x,0)$ in the initial data has uniformly positive upper and lower bounds.
Suppose that  $\big(u(x,t),{v}(x,t)\big)$ is a Lipschitz continuous weak solution for the initial value problem of (\ref{ps}) with $1<\gamma<3$,
when $(x,t)\in{\mathbb R}\times [0,T]$, where $T$ 
can be any positive number. 
Then
\beq\label{main0_esti_0}
{v}(x,t)\leq \max_x\big({{v}(x,0})\big)+L\, t, \com{for any} (x,t)\in{\mathbb R}\times [0,T]
\eeq
with some positive constant $L$ depending only on the initial data.
\end{theorem}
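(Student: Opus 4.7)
My strategy would be to bound $v_t = u_x$ uniformly above by a constant depending only on the initial data; integration in $t$ then yields \eqref{main0_esti_0} immediately. The engine would be a Lax-type Riccati equation for a rescaled gradient of the Riemann invariants, closed by a bootstrap in which the hypothesis $1<\gamma<3$ is used via an exact scaling cancellation.

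\emph{Setup and Riccati equations.} First I would introduce the rescaled gradient variables $y := \sqrt{c}\,s_x$ and $q := \sqrt{c}\,r_x$. Differentiating the identities $\partial_+ s = 0$ and $\partial_- r = 0$ in $x$, and using $\partial_+ v = r_x$, $\partial_- v = s_x$, one obtains
\begin{equation*}
\partial_+ y \;=\; -a(v)\,y^2, \qquad \partial_- q \;=\; -a(v)\,q^2, \qquad a(v) \;=\; \tfrac{\gamma+1}{4(K\gamma)^{1/4}}\,v^{(\gamma-3)/4}.
\end{equation*}
Because $\gamma<3$, the coefficient $a(v)$ is positive and \emph{decreasing} in $v$. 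For $y\ge 0$, integrating the Riccati along the forward characteristic $\tilde x(\cdot)$ through $(x,t)$ back to $t=0$ (which is possible since the Lipschitz solution has bounded characteristic speed on bounded time intervals) gives the Oleinik-type identity
\begin{equation*}
y^+(x,t) \;=\; \frac{y^+(\tilde x(0),0)}{1 + y^+(\tilde x(0),0)\,\int_0^t a\bigl(v(\tilde x(\tau),\tau)\bigr)\,d\tau},
\end{equation*}
with an analogous statement for $q^+$ along backward characteristics.

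\emph{Bootstrap and closure.} Set $V(t) := \max_x v_0(x) + L t$, with $L>0$ to be chosen, and let $T^* := \sup\{t\in[0,T]:\, v(\cdot,\tau)\le V(\tau)\text{ for all }\tau\le t\}$. On $[0,T^*]$, monotonicity of $a(\cdot)$ yields
\begin{equation*}
\int_0^t a(v)\,d\tau \;\ge\; \tfrac{1}{L(K\gamma)^{1/4}}\bigl[V(t)^{(\gamma+1)/4} - V(0)^{(\gamma+1)/4}\bigr].
\end{equation*}
Substituting into the Oleinik identity and converting to $s_x^+ = y^+/\sqrt{c} = y^+\,v^{(\gamma+1)/4}/(K\gamma)^{1/4}$ while using $v\le V(t)$, the factors $V(t)^{(\gamma+1)/4}$ cancel algebraically, and a direct monotonicity-in-$V$ check shows $s_x^+(x,t)\le L$ as soon as $L$ is chosen to exceed the initial Lipschitz constants $\|s_0'\|_\infty$ and $\|r_0'\|_\infty$. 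The same bound holds for $r_x^+$, so
\begin{equation*}
v_t \;=\; u_x \;\le\; u_x^+ \;\le\; \tfrac{1}{2}(s_x^+ + r_x^+) \;\le\; L
\end{equation*}
on $[0,T^*]$. Integrating gives $v(x,t)\le v_0(x) + Lt \le V(t)$ with strict inequality for $0<t\le T^*$, forcing $T^* = T$ by continuity and producing \eqref{main0_esti_0}.

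\emph{Main obstacle.} The heart of the argument is the \emph{exact} cancellation between the Riccati-driven decay $y^+ \sim 1/\int a \sim V^{-(\gamma+1)/4}$ and the blow-up factor $v^{(\gamma+1)/4}$ inherited from $1/\sqrt{c}$; this cancellation is why the hypothesis $1<\gamma<3$ enters crucially, since it forces $(\gamma-3)/4<0$ (so $a(v)$ is decreasing and the lower bound $a(v)\ge a(V)$ is available) while simultaneously $(\gamma+1)/4>0$ (so the time-integral of $V^{(\gamma-3)/4}$ grows like $V(t)^{(\gamma+1)/4}$ and matches the explosive factor exactly). A secondary concern is the compression regions where $y,q<0$ and the Riccati has a blow-up tendency: these are controlled by the Lipschitz hypothesis on $[0,T]$ and in any case pose no obstruction to the upper bound on $v$, since only the positive parts $s_x^+, r_x^+$ enter the bound on $u_x^+ = (v_t)^+$, while compressive contributions to $u_x$ only drive $v$ down and assist the desired estimate.
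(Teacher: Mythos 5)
Your route is genuinely different from the paper's. The paper never touches the Riccati equations in its proof of Theorem \ref{main0}: it runs a Dafermos--DiPerna type polygonal scheme, classifies jump edges by rarefaction/compression character, proves that the minimal horizontal extent $a^{(n)}(t)$ of rarefactive edges is nondecreasing (Lemma \ref{a_t_in}), deduces the linear growth of $v$ district by district, and only then transfers the bound to the Lipschitz solution via the local convergence result of \cite{lxy} plus weak--strong uniqueness, iterated in uniform time steps. Your analytic core, by contrast, is a characteristic ODE bootstrap, and the computation at its heart is correct: with $a(v)=\frac{\gamma+1}{4(K\gamma)^{1/4}}v^{(\gamma-3)/4}$ one indeed gets $\int_0^t a(V(\tau))\,d\tau=\frac{1}{L(K\gamma)^{1/4}}\bigl[V(t)^{(\gamma+1)/4}-V(0)^{(\gamma+1)/4}\bigr]$, and feeding this into the Riccati solution at a point whose backward characteristic starts at $x_0$ gives $r_x\le \frac{q_0\,L\,V(t)^{(\gamma+1)/4}}{L(K\gamma)^{1/4}+q_0\bigl(V(t)^{(\gamma+1)/4}-H_0^{(\gamma+1)/4}\bigr)}$, which is $\le L$ exactly when $L\ge r_0'(x_0)\bigl(\max v_0/v_0(x_0)\bigr)^{(\gamma+1)/4}$. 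Two fixable slips: the admissible $L$ is not merely the initial Lipschitz constants of $s_0,r_0$ but carries the extra factor $\bigl(\max v_0/\min v_0\bigr)^{(\gamma+1)/4}$ (since $y_0,q_0$ are weighted by $\sqrt{c(v_0(x_0))}$ while the bootstrap ceiling starts at $\max v_0$) --- harmless, still data-dependent; and the continuation argument needs a cushion (e.g.\ $V(t)=\max v_0+\delta+Lt$, or $L+\varepsilon$ in the conclusion), because $v\le v_0(x)+Lt\le V(t)$ is not strict at maximizers of $v_0$. As a bonus, your argument explains the weaker rate of \cite{CPZ}: discarding the Riccati decay and using only $q\le q_0$ gives $\partial_+ v\lesssim v^{(\gamma+1)/4}$, i.e.\ precisely the $O\bigl((1+t)^{4/(3-\gamma)}\bigr)$ bound, while your cancellation recovers the sharp linear rate. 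For classical solutions this is a shorter and more explicit proof than the paper's.

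The genuine gap is regularity. The theorem concerns Lipschitz continuous weak solutions, and the Riccati identities (Lemma \ref{p_lemma_1}) are established only for $C^1$ solutions; for merely Lipschitz $(u,v)$ the quantities $y=\sqrt{c}\,s_x$ and $q=\sqrt{c}\,r_x$ are only $L^\infty$ functions defined a.e.\ in the plane, and there is no a priori meaning to ``$y$ restricted to a single forward characteristic is absolutely continuous and solves $\partial_+y=-a(v)y^2$'', hence none to the Oleinik-type identity you integrate. Your parenthetical justification addresses only the existence of characteristics (bounded speed), not the validity of the ODE along them. Closing this requires real work: either a broad-solution/characteristic-coordinates argument justifying the Riccati a.e.\ along the characteristic foliation, or an approximation of the given Lipschitz solution by classical (or scheme) solutions on all of $[0,T]$ together with a stability/uniqueness statement --- and smooth approximants are only known to exist locally in time and could a priori lose $C^1$ regularity before $T$, so this passage is neither automatic nor free of circularity. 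That missing layer is exactly what Step 2 of the paper's proof supplies (local convergence of the polygonal scheme from \cite{lxy} plus weak--strong uniqueness, iterated). As written, your proposal proves \eqref{main0_esti_0} for classical solutions; to claim it in the Lipschitz class you must add an approximation or a.e.-characteristic justification of weight comparable to the machinery the paper builds.
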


To prove this Theorem, we study a polygonal scheme similar to the one used in \cite{lin2,lxy}.
The polygonal scheme was first established by Dafermos in \cite{dafermos2} in the  study of scalar conservation law
then was modified by Diperna for system of conservation laws. The scheme has been widely used for the well-posedness and behaviors of hyperbolic conservation laws \cite{bressan, Dafermos2010,TC}.

In the polygonal scheme, we divide the $(x,t)$-plane into finite districts, on each of
which the forward (resp. backward) waves are in the same type: forward (resp. backward) rarefaction or compression. 

Because the density increases when it crosses a compressive wave, it seems that only districts including forward and backward rarefaction waves  directly make the density decreasing. However, things are more complicated than this. In fact,
for rarefaction-rarefaction districts adjacent to initial line, we can directly use a similar argument in \cite{lin2} to find the desired bound on density, where the bound depends on $\max_{x\neq y}\frac{s_0(x)-s_0(y)}{x-y}$ and $\max_{x\neq y}\frac{r_0(x)-r_0(y)}{x-y}$, where these two one-side Lipschitz constants mean how rarefactive the initial data are. However, the major difficulty we conquer in this paper is how to analyze those rarefaction-rarefaction districts far away from the initial line. The rarefaction waves in these districts have passed some compressions in the opposite families before reaching the rarefaction-rarefaction districts. So $\max_{x\neq y}\frac{s(x,t)-s(y,t)}{x-y}$ and $\max_{x\neq y}\frac{r(x,t)-r(y,t)}{x-y}$ might increase on time, hence to obtain a sharp lower bound on density,
we need to carefully analyze all three types of districts: rarefaction-rarefaction, rarefaction-compression and compression-compression districts.

The key new idea is given in  Lemma \ref{a_t_in}, in which we show that a function $a(t)$ defined in Definition \ref{def_at} is not increasing on time, in the scheme approximating general smooth solutions. Using this lemma, we could piece up the density estimates we got in each district into a global one up to time $T$ on the district of Lipschitz continuous solutions.

As a corollary of Theorem \ref{main0}, we achieve a better estimate for the life-span of classical solutions including compression than \cite{CPZ}, when $1<\gamma<3$. 

This paper is organized in the following order. In Section 2, we review the explicit example given in \cite{courant} in which density approaches zero in an order of $O(1+t)^{-1}$. In Section 3,
we review the polygonal scheme and define Rarefaction/Compression character. In Section 4,
we prove the main Theorem on lower bound of density.

%
\section{Exact interaction between two rarefactions}
In this section, we review the concrete example for interaction between two centered rarefaction simple waves provided in Section 82 in \cite{courant}. Please find detail calculations in Section 82 in \cite{courant}.

	\begin{figure}[htp] \centering
		\includegraphics[width=.4\textwidth]{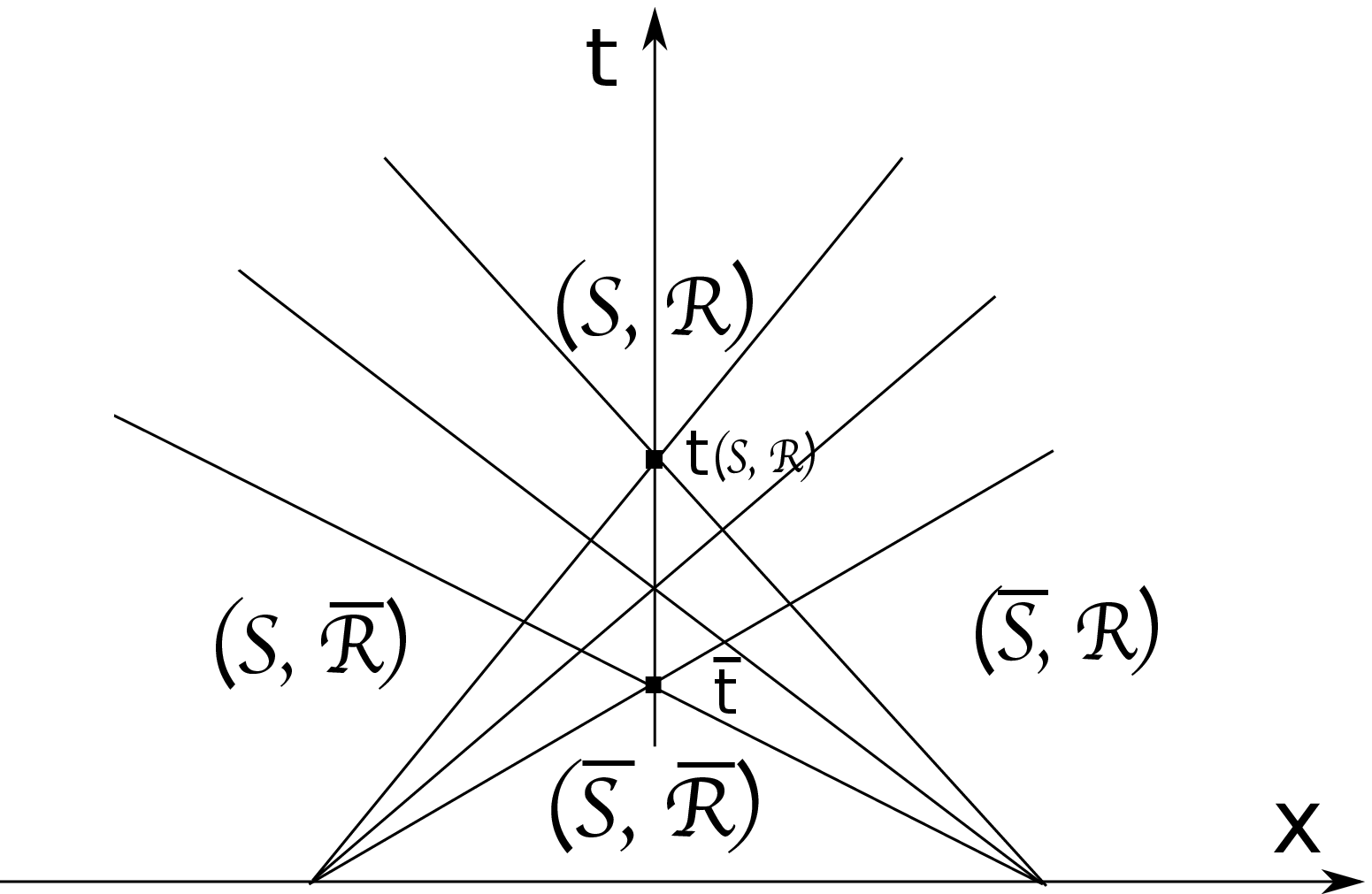}\qquad
		\includegraphics[width=.4\textwidth]{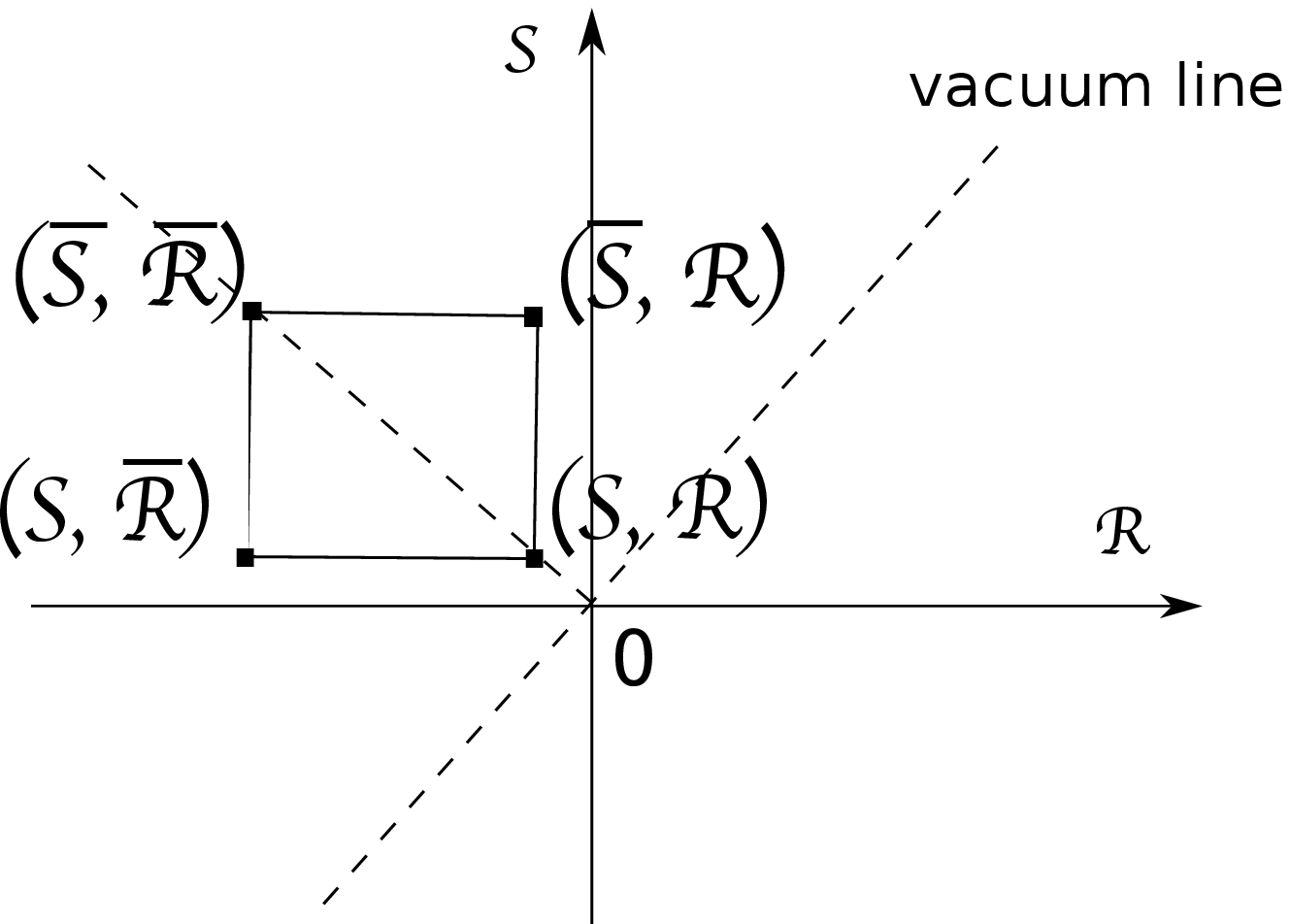}
		\caption{\label{cournat_inter}Interaction of two centered rarefaction waves.}
	\end{figure}
	
	In this section, we use slightly different Riemann invariants $\mathcal S$
	and $\mathcal R$, instead of $s$ and $r$. We denote
\beq\label{math_S_R}
\mathcal S=u+\textstyle\frac{2\sqrt{K}}{\gamma-1}v^{\frac{1-\gamma}{2}},\quad
\mathcal R=u-\textstyle\frac{2\sqrt{K}}{\gamma-1}v^{\frac{1-\gamma}{2}}.
\eeq
In fact, $\mathcal{S}$ and $s$
	and $\mathcal{R}$ and $r$ are different by two constants, respectively. It is easy to get
	\beq\label{math_S_R2}
\mathcal S-\mathcal R=\textstyle\frac{4\sqrt{K}}{\gamma-1}v^{\frac{1-\gamma}{2}}\,.
\eeq

For simplicity, in Figure \ref{cournat_inter}, we only consider an interaction between two centered rarefaction waves,
where we assume that the first interaction happens at $t=\bar t>0$ and  $x=0$, with constant state 
$(\bar{\mathcal{S}}, \bar{\mathcal{R}})$ below the point $(0,\bar t)$. Furthermore, we assume that $u(0,\bar t)=0$,
hence
\[
\bar{\mathcal{S}}=-\bar{\mathcal{R}}>0.
\]
For simplicity, we assume that $\mathcal{S}$ in the left state of interaction is always positive and 
$\mathcal{R}$ in the right state of interaction is always negative. 

Then using the fact that $\mathcal{S}$ and $\mathcal{R}$ are constant along forward and backward characteristics, respectively, in \cite{Riemann}, Riemann first found below equation:
\beq\label{t_S_R}
t(\mathcal{S},\mathcal{R})=\bar t\left(\frac{\bar{\mathcal{S}}-\bar{\mathcal{R}}}
{\mathcal{S}-{\mathcal{R}}}\right)^\alpha
F\left( 1-\alpha,\alpha,1,\frac{(\bar{\mathcal{S}}-\mathcal{S})(\bar{\mathcal{R}}-\mathcal{R})}{(\bar{\mathcal{S}}-\bar{\mathcal{R}})(\mathcal{S}-\mathcal{R})} \right)
\eeq
where $t(\mathcal{S},\mathcal{R})$ is the time when the interaction ends, $F(z_1,z_2,z_3,z_4)$ is a hypergeometric function and
\beq\label{la_def}
\alpha=\frac{\gamma+1}{2(\gamma-1)}.
\eeq
When $\alpha$ is a positive integer $N$ or equivalently 
\[
\gamma=\frac{2N+1}{2N-1}\,.
\]
Then \eqref{t_S_R} can be simplified into
\beq\label{t_S_R0}
t(\mathcal{S},\mathcal{R})=\bar t\left(\frac{\bar{\mathcal{S}}-\bar{\mathcal{R}}}
{\mathcal{S}-{\mathcal{R}}}\right)^\alpha 
P_{\alpha-1}
\left(\frac{1}{\bar{\mathcal{S}}}\frac{\bar{\mathcal{S}}^2-\mathcal{S}\mathcal{R}}{\mathcal{S}-\mathcal{R}}\right)
\eeq
where $P_{\alpha-1}(z)$ is the Legendre's function, which is a $(\alpha-1)$'s order polynomial.

Recall that we assume that $\mathcal{S}$ in the left state of interaction is always positive and 
$\mathcal{R}$ in the right state of interaction is always negative. Let  $|\mathcal S|$ and $|\mathcal R|$ be both very small, 
then density $\rho$ at the point where interaction ends (at time $ t(\mathcal{S},\mathcal{R})$) is very close to zero. Using (\ref{math_S_R2}), to the leading order, we have 
 \[
 t(\mathcal{S},\mathcal{R})=O(\rho^{-1})
 \]
 where we used (\ref{math_S_R}),  \eqref{la_def}, \eqref{t_S_R0},
 $P_{\alpha-1}(z)$ is a $(\alpha-1)$'s order polynomial and 
 $\frac{1}{\bar{\mathcal{S}}}(\bar{\mathcal{S}}^2-\mathcal{S}\mathcal{R})>\bar{\mathcal{S}}$ is uniformly positive. 
When $|\mathcal S|$ and $|\mathcal R|$ both approach zero, 
 \[
  t(\mathcal{S},\mathcal{R})\rightarrow \infty\,.
 \]
   So when $t$ is large enough,
  \[
 \min_x\rho(x,t)=O(t^{-1})\,.
 \]

It is clear that the initial density  in the explicit example is uniformly positive,
by the right picture of Figure \ref{cournat_inter}.

%
\section{The polygonal scheme and Rarefaction/Compression character}
In this section, we first review some basic setup for the polygonal scheme, following the notations in \cites{lin2,lxy}.
%
\subsection{Pressure, Riemann invariants and Standard states\label{subsec_2.1}}
The polygonal approximation of $p({v})$ is defined as follows.

For any given positive integer $n$, let ${v}_0^{(n)}=1$ and ${v}_k^{(n)}$ with integer $k$
determined by the recurrence formula
\beq\label{G_def}
G({v}_k^{(n)},{v}_{k+1}^{(n)}):=\Big(p({v}_{k}^{(n)})-p({v}_{k+1}^{(n)}) \Big)\,\Big({v}_{k+1}^{(n)}-{v}_{k}^{(n)}\Big)=\frac{1}{n^2}\,.
\eeq

It is easy to check that for each fixed $n$ there exists a unique sequence $\left\{ {v}_{k}^{(n)}\right\}$ with positive integer $k$, defined by \eqref{G_def}, such that 
\[
\lim_{k\rightarrow \infty}{v}_{k}^{(n)}=\infty,\quad\lim_{k\rightarrow -\infty}{v}_{k}^{(n)}=0\,.
\]
Furthermore, denote
\[
\delta_k^{(n)}:={v}_{k+1}^{(n)}-{v}_k^{(n)}\,,
\]
hence,
\[
\lim_{k\rightarrow \infty}\delta_k^{(n)}=\infty\,.
\]
The polygonal lines with vertices $\bigl({v}_k^{(n)},\, p({v}_k^{(n)})\bigr)$
are the polygonal approximation of $p({v})$, denoted by $p^{(n)}({v})$.

Define
\beq\label{Phi_def}
\Phi^{(n)}({v}):=\int^{{v}}_1 \sqrt{-{p^{(n)}}'({v})}\,d{v}\,,
\eeq 
then
\[
\Phi^{(n)}({v}_{0}^{(n)})=\Phi^{(n)}(1)=0\,,
\]
and
\[
\Phi^{(n)}({v}_{k+1}^{(n)})-\Phi^{(n)}({v}_{k}^{(n)})=\sqrt{G({v}_k^{(n)},{v}_{k+1}^{(n)})}=\frac{1}{n}\,,
\]
therefore
\[
\Phi^{(n)}({v}_{k}^{(n)})=\frac{k}{n}\,,
\]
where $k$ is an integer and $n$ is a positive integer.

%
We define
\beq\label{r_s_def}
r^{(n)}(u,{v})=u+\Phi^{(n)}({v}),\qquad s^{(n)}(u,{v})=u-\Phi^{(n)}({v})\,,
\eeq
which are corresponding to the Riemann invariants $r$ and $s$ defined in \eqref{s_r_eq}, respectively. 

The following states are called standard states:
\[
\big(u,\,{v}\big)=\Big(\frac{i}{n},\,{v}^{(n)}_j\Big), \quad i.e.\quad 
\big(u,\,\Phi^{(n)}\big)=\Big(\frac{i}{n},\,\frac{j}{n}\Big)\,,
\]
and
\[
\big(r^{(n)},\, s^{(n)}\big)=\Big(\frac{2k}{n},\,\frac{2l}{n}\Big)\,,
\]
where $i$ and $j$ are integers and
\[
k=\frac{1}{2}(i+j)\qquad l=\frac{1}{2}(i-j)\,.
\]

For convenience, we might omit the superscript $(n)$ if there are no confusions.
%
\subsection{Riemann problems\label{sub_sec_RP}}
Then we consider the following Riemann problem:edge

	\beq\label{ps-l}\left\{
		\begin{array}{rcl}
		{v}_t-u_x &=& 0\,\vspace{.2cm}\\
		u_t+p^{(n)}({v})_x &=&0\,,\vspace{.2cm}\\
		\big(u_0(x),\, {v}_0(x)\big)
		&=&\left\{
  \begin{array}{ll}
  \big(u_-,\, {v}_-\big), & x<0\,,\vspace{.2cm}\\
  \big(u_+,\, {v}_+\big), & x>0\,,\\
  \end{array}
  \right.	
		\end{array}\right.
	\eeq
with
\[
  \big(u_-, \,{v}_-\big)=\Big(\frac{i}{n},\,{v}^{(n)}_j\Big)\,, \qquad
   \big(u_+, \,{v}_+\big)=\Big(\frac{i+M+N}{n},\,{v}^{(n)}_{j+M-N}\Big) \,,
\]
where $M$ (and $N$) can be $-1$, $0$ or $1$.
Clearly, we have
\[
\big(r^{(n)}_0(x),\, s^{(n)}_0(x)\big)
=\left\{
  \begin{array}{ll}
  \big(r^{(n)}_-,\, s^{(n)}_-\big), & x<0\,,\vspace{.2cm}\\
  \big(r^{(n)}_+,\, s^{(n)}_+\big), & x>0\,,\\
  \end{array}
  \right.			
\]
with
\[
\big(r^{(n)}_-,\, s^{(n)}_-\big)=\Big(\frac{2k}{n},\,\frac{2l}{n}\Big)\,,\qquad
\big(r^{(n)}_+,\, s^{(n)}_+\big)=\Big(\frac{2(k+M)}{n},\,\frac{2(l+N)}{n}\Big)\,.
\]

The solution of the Riemann problem of  \eqref{ps-l} consists of $|M|+|N|+1$ standard states,
divided by $|M|+|N|$ jump discontinuities (straight lines centered at the origin). 

To calculate the middle state $(r^{(n)}_m,s^{(n)}_m)$ in the solution of Riemann problem, we use
following criterions:jump

\bes
\label{s_r_con_approx}
s^{(n)} \com{and} r^{(n)}  \com{are constants across backward and forward jumps, respectively.} \\
\nn
\ees
This criterion is corresponding to \eqref{s_r_eq} for the smooth solution.
Hence the middle state in the solution of Riemann problem is always 
\[
\big (r^{(n)}_m,s^{(n)}_m\big)=\Big(\frac{2(k+M)}{n},\frac{2l}{n}\Big).
\]
Then it is easy to have 
\[\big(u_m, {v}_m\big)=\Big(\frac{i+M}{n},\,{v}^{(n)}_{j+M}\Big).\]

\begin{remark}
We note that
\begin{itemize}
\item
if $M=1$ (resp. $N=1$),
the backward (resp. forward) jump discontinuity describes a rarefactive wave.
\item
If $M=0$ (resp. $N=0$),
there are no backward (resp. forward) jump discontinuity.
\item
if $M=-1$ (resp. $N=-1$),
the backward (resp. forward) jump discontinuity describes a compressive wave.
\end{itemize}
The definition of rarefaction and compression are in Definition \ref{def1}. We refer the reader to \cites{BCZ, CJ} for more details on wave curves for rarefaction and compression waves.
\end{remark}

When $M$ is $-1$ or $1$, the slope of the backward jump is
\beq \label{geng2}\begin{split}
\ba \lambda=-\sqrt{-\frac{p({v}^{(n)}_{j+M})-p({v}^{(n)}_{j})}{{v}^{(n)}_{j+M}-{v}^{(n)}_{j}}}=\frac{-1}{n\,
\big|{v}^{(n)}_{j+M}-{v}^{(n)}_{j}\big|}<0.
\end{split}
\eeq
When $N$ is $-1$ or $1$, the slope of the forward jump is
\beq \label{geng3}\begin{split}
\fa \lambda=\sqrt{-\frac{p({v}^{(n)}_{j+M})-p({v}^{(n)}_{j+M-N})}{{v}^{(n)}_{j+M}-{v}^{(n)}_{j+M-N}}}=\frac{1}{n\,\big|{v}^{(n)}_{j+M}-{v}^{(n)}_{j+M-N}\big|}>0.
\end{split}
\eeq

%

%
%
\subsection{The polygonal scheme and Rarefactive/Compressive characters}
For any given Lipschitz continuous initial data $(r_0,{s}_0)$ with $u_0 $ and ${v}_0$ uniformly bounded and ${v}_0$ uniformly away from zero, similar as in \cite{lin2}, we can find a sequence of piecewise constant functions $(u^{(n)}_0,{v}^{(n)}_0)$ which takes values on finitely many standard states. More precisely, 
\[\big(r^{(n)}(x),\, s^{(n)}(x)\big)=\Big(\frac{2k^{(n)}_{\alpha}}{n},\,\frac{2l^{(n)}_{\alpha}}{n}\Big)\,,\] 
for some integers $k^{(n)}_{\alpha}$ and $l^{(n)}_{\alpha}$ as $x\in(x^{(n)}_{\alpha}, x^{(n)}_{\alpha+1})$,
where integer $\alpha$ is from $1$ to $j$ and $x^{(n)}_1=-\infty$ and $x^{(n)}_{j+1}=\infty$.
Furthermore, $(k^{(n)}_{\alpha-1}, l^{(n)}_{\alpha-1})$
is different from $(k^{(n)}_{\alpha}, l^{(n)}_{\alpha})$ and 
\[
|k^{(n)}_{\alpha-1}-k^{(n)}_{\alpha}|\leq1,\qquad |l^{(n)}_{\alpha-1}-l^{(n)}_{\alpha}|\leq1\,.
\]

We have $(u^{(n)}_0,{v}^{(n)}_0)\rightarrow(u_0,{v}_0)$ uniformly, and
\beq\label{J_est}
\max_{\alpha}{\frac{2}{n\Big(x^{(n)}_{\alpha+1}- x^{(n)}_{\alpha}\Big)}}\longrightarrow\max_{x\neq y}\Big\{\frac{s_0(x)-s_0(y)}{x-y},\frac{r_0(x)-r_0(y)}{x-y}\Big\}, \quad \text{as}\quad n\rightarrow\infty\,.
\eeq

For any positive integer $n$, we solve the Riemann problem at each discontinuity. Note the left, right and middle sates in the solution of each Riemann problem are still
standard states, and split by jump discontinuities. Let these jump discontinuities evolve. When two jumps in different families interact with each other, new Riemann problem appears, which can also be solved. Finally, we get a well-defined polygonal scheme, including finitely many jumps, before the possible interactions between jumps of the same family.

In \cite{lxy}, the authors show that under some regularity condition on the initial data  $(u_0,{v}_0)$, the approximation solutions $(u^{(n)},{v}^{(n)})$ in the polygonal scheme are well-defined, i.e. there is no interaction between jumps of the same family, 
in a time interval $t\in[0,T]$ with $T>0$ only dependent on the $C^1$-norm of $(u_0,{v}_0)$ but independent of $n$. Furthermore, the approximation solutions converge to a classical solution for p-system when $n\rightarrow\infty$. We will give more details on this local-in-time convergence result later.

In this subsection, we first assume that there is no interaction between jumps of the same family. 

\begin{definition}
In the polygonal scheme, for any positive integer $n$, the $(x,t)$-plane is divided into finite {\em blocks} by finitely many jump discontinuities. If a block is a diamond, we call it a {\em diamond block} or {\em diamond}.
Each jump discontinuity is also divided into finite pieces, which are denoted by {\em jump edges}, by finite many intersection points between jumps. 
\end{definition}
To be precise on the definitions of a block or a jump edge, we note $s$ and $r$ are both constant inside each block and on each side of a jump edge, or in another word, there are no other jump discontinuities go inside a block or a jump edge.
\bigskip

To obtain a lower bound on density, it is crucial to study the variation of a jump edge inside
a characteristic tube, such as the propagation of edge $l_1$ in the forward characteristic direction in Figure \ref{R-def12}.

First, we define the Rarefactive/Compressive (R/C) character on a
jump edge. 
\begin{definition}[R/C character on a jump edge]\label{def_RC}
We classify the jump edges into four types:
$R_r$, $R_c$, $C_r$ and $C_c$, where the capital letter denotes the character on
the boundary behind the edge and the subscript denotes 
character on the boundary ahead of the edge. Most of time, we add an arrow 
to denote forward or backward character, respectively.

More precisely, A backward (reap.  forward) jump edge $l_1$ (resp. $l_2$) in the scheme, shown in Figure \ref{R-def12}, is said to be:
\begin{itemize}
\item[i.] $\fa R_r$ (resp. $\ba R_r$), if $u_-< u_+<u_{++}$ (resp. $u_{--}< u_-<u_+$). 
\item[ii.] $\fa R_c$ (resp. $\ba R_c$), if $u_-< u_+$ and $u_+>u_{++}$  (resp. $u_-< u_+$  and $u_{--}> u_-$).
\item[iii.] $\fa C_r$ (resp. $\ba C_r$), if $u_-> u_+$ and $u_+<u_{++}$  (resp. $u_-> u_+$  and $u_{--}< u_-$). 
\item[iv.] $\fa C_c$ (resp. $\ba C_c$), if $u_- > u_+>u_{++}$  (resp. $u_{--}> u_->u_+$).
\end{itemize}
For the left-most forward (resp. right-most backward) jump edge which is unbounded from its left (resp. right) hand side, 
we always say this jump edge belongs to either $C_c$ or $R_r$ by checking the relation of $u$
from the right (resp. left) boundary following above table. 

For simplicity, we always use $C$ to denote $C_r$ or $C_c$ character.
\end{definition}

	\begin{figure}[htp] \centering
		\includegraphics[width=.37\textwidth]{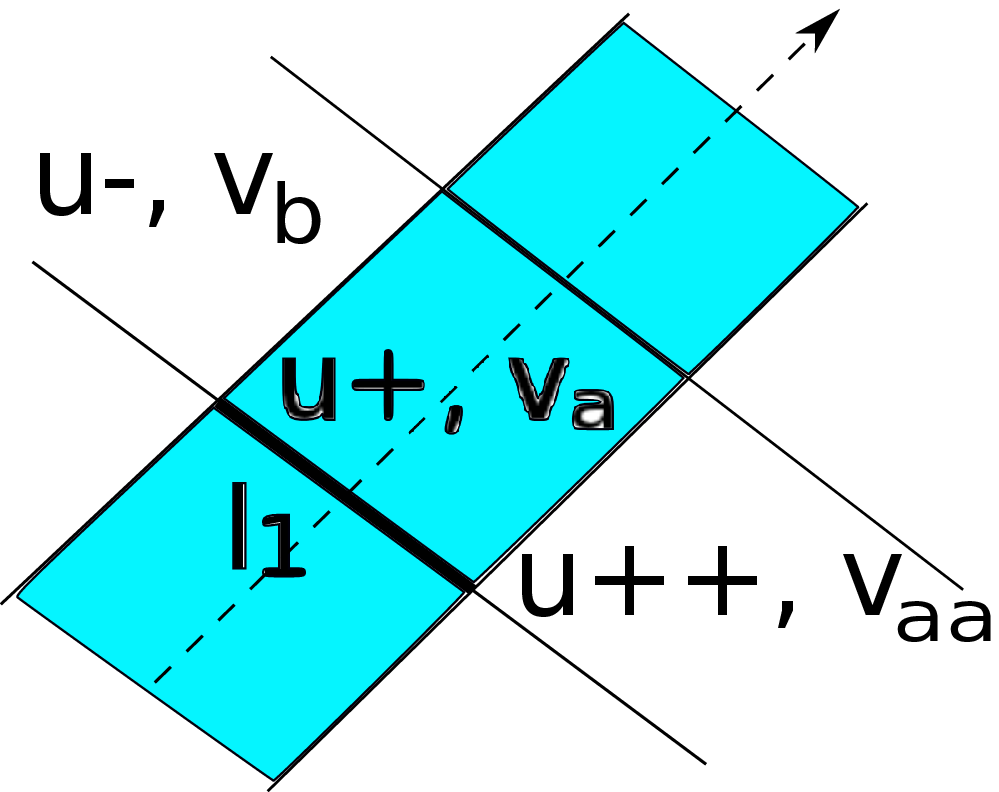}\qquad\qquad	\includegraphics[width=.37\textwidth]{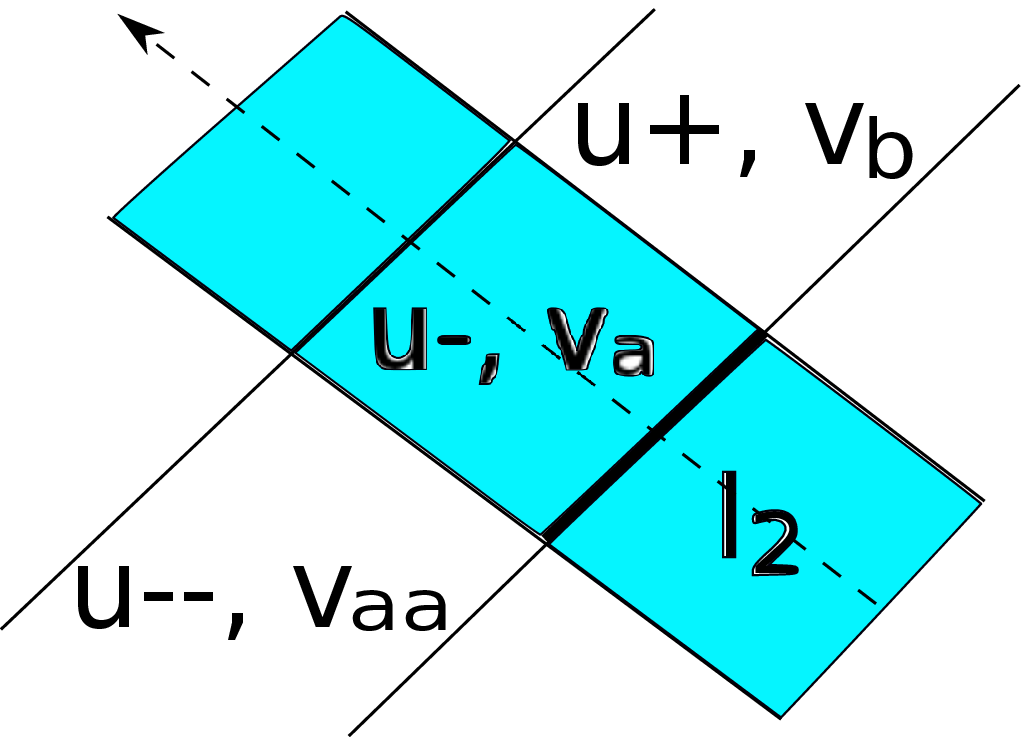}
		\caption{\label{R-def12}Definition of forward (resp. backward) R/C characters for the jump edge $l_1$ (resp. $l_2$).}
	\end{figure}

\begin{remark}\label{last_v}
In Figure \ref{R-def12}, we use the subscripts $a$ and $b$ to denote states ahead of and behind a jump wave front. By (\ref{s_r_con_approx}),
one always has ${v}_b> {v}_a$ for a $R_r$ or $R_c$  jump edge;
and ${v}_b<{v}_a$ for a $C_r$ or $C_c$ jump edge.

In the $R_c$ and $C_r$ pieces, ${v}_{aa}={v}_b$.
\end{remark}

Then we define the $R/C$ character on any blocks.

\begin{definition}[R/C character in a block]
A block is called a $\fa R_r\ba R_r$ block
if its South-West and South-East boundaries are $\fa R_r$ and $\ba R_r$, respectively. 

Similar definitions are also for $\fa R_r\ba R_c$ $\fa R_c\ba R_r$, $\fa R_c\ba R_c$, $\fa C \ba R_r$, $\fa C \ba R_c$, $\fa R_r \ba C$, $\fa R_c\ba C$ and $\fa C \ba C$ blocks.
\end{definition}

	\begin{figure}[htp] \centering
		\includegraphics[width=.3\textwidth]{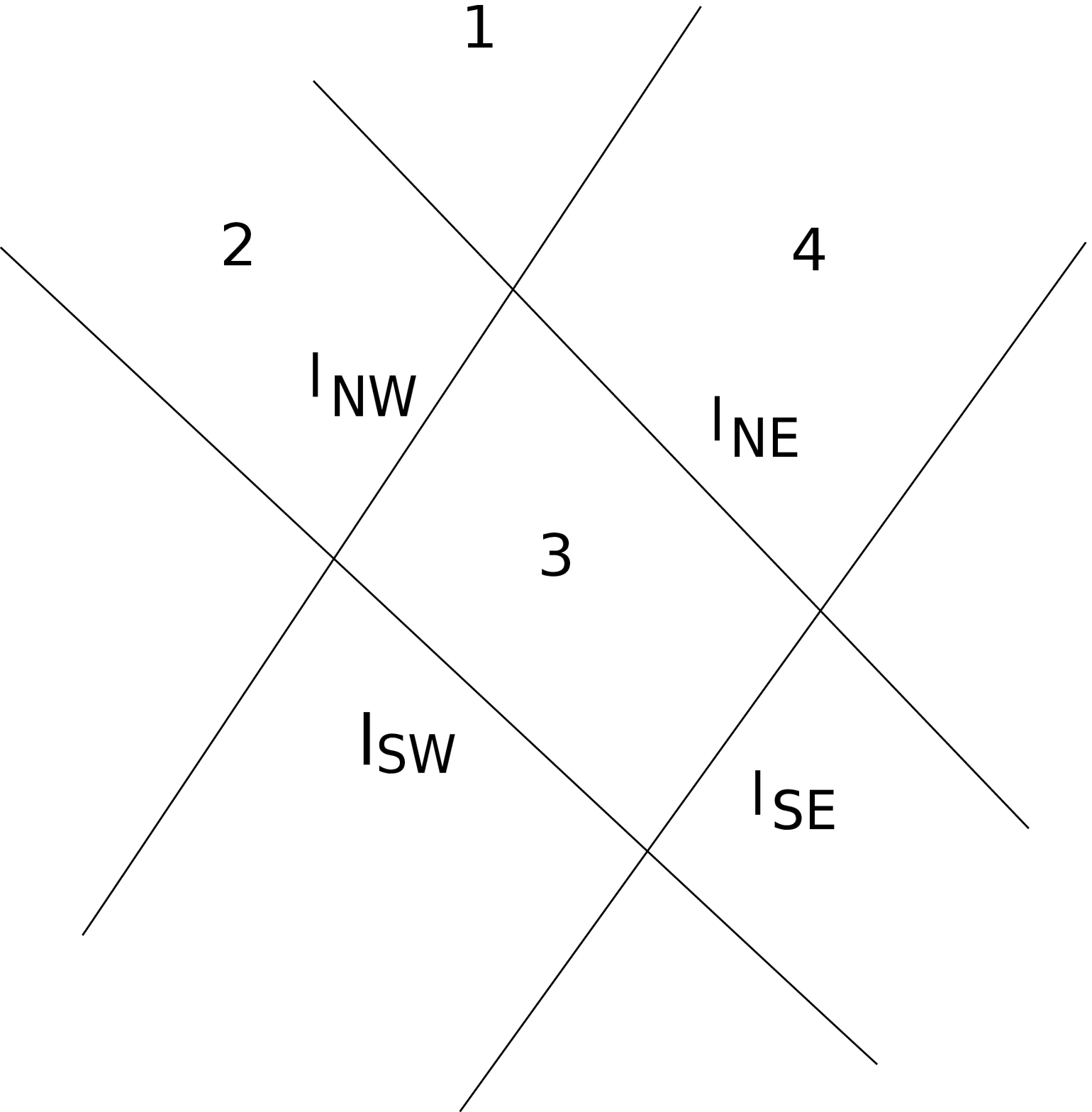}
		\caption{\label{fig_dnc}Proof of Lemma \ref{RC-dnc}.}
	\end{figure}

\begin{lemma}\label{RC-dnc}
The forward (resp. backward) jump edges $l_{SW}$ and $l_{NE}$ (resp.  $l_{SE}$ and $l_{NW}$) shown in Figure \ref{fig_dnc} are same type of jump. 
\end{lemma}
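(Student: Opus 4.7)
The plan is to exploit the constancy of the Riemann invariants across jumps \eqref{s_r_con_approx} to show that the Riemann problems at the bottom-most and top-most vertices of the diamond are identical. Combined with strength-preservation at each interior vertex, this will force opposite edges of the same family to share both their rarefaction/compression strength and their full R/C character.

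First I would label the four corners of the diamond as $P_b$ (earliest in time), $P_\ell$ and $P_r$ (the two side vertices), and $P_t$ (latest in time), and let $D$ denote the standard state inside the diamond. The Riemann problem at $P_b$ has some far-left state $L_b$, middle state $D$, and far-right state $R_b$; the two outgoing jumps from $P_b$ are the two lower diamond edges, one forward and one backward. Since no other jump crosses the interior of any diamond edge, the state adjacent to each outer side is constant along that edge; in particular, the outer state along the lower backward edge equals $L_b$, and along the lower forward edge equals $R_b$.

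Next I would propagate $L_b$ up through the side vertex $P_\ell$. The middle state of the local Riemann problem at $P_\ell$ is the state wedged between its two incoming jumps, which sits on the outer side of the lower backward diamond edge and hence equals $L_b$. By the Riemann problem relations of Section \ref{sub_sec_RP}, this same state $L_b$ sits on the outer side of the upper forward diamond edge emerging from $P_\ell$; arriving at $P_t$ along that edge, $L_b$ is identified as the left state of the Riemann problem at $P_t$. The symmetric argument through $P_r$ yields $R_b$ as the right state at $P_t$, and $D$ is clearly the middle state there. Therefore the Riemann problems at $P_b$ and $P_t$ have identical left/middle/right states, so the strengths of their outgoing jumps coincide.

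Finally, at each interior vertex the strengths of the incoming and outgoing jumps of the same family must agree, because the far-left state $L$ and far-right state $R$ lie in unbounded exterior regions that are connected above and below the vertex. Combined with the equality of the $P_b$- and $P_t$-Riemann problems, this forces both forward diamond edges to carry the same strength $N$ and both backward diamond edges to carry the same strength $M$, so the two forward (resp.\ backward) edges are simultaneously rarefactive or simultaneously compressive. The ``ahead'' subscript in the R/C character of Definition \ref{def_RC} is determined by the adjacent opposite-family jump at the relevant endpoint, and by the picture these adjacent jumps are themselves the other two diamond edges whose strengths have just been matched; hence the full R/C character agrees on opposite edges. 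The main subtlety is the propagation step through the side vertices $P_\ell$ and $P_r$ via their local Riemann structure; once this is in place, the rest of the argument is an immediate consequence of \eqref{s_r_con_approx}.
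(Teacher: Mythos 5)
Your argument has a genuine gap, and it occurs exactly at the step you flag as the main subtlety: the propagation of $L_b$ through the side vertex $P_\ell$. When the lower backward edge crosses the forward jump arriving at $P_\ell$, the middle state does change. Let $\Lambda$ denote the far-left state at $P_\ell$ and $D$ the interior state. By \eqref{r_s_def} and \eqref{s_r_con_approx}, the incoming middle state $L_b$ satisfies $r^{(n)}(L_b)=r^{(n)}(\Lambda)$ and $s^{(n)}(L_b)=s^{(n)}(D)$, whereas the outgoing middle state $L_b'$, which is the state actually bordering the upper forward edge emanating from $P_\ell$, satisfies $s^{(n)}(L_b')=s^{(n)}(\Lambda)$ and $r^{(n)}(L_b')=r^{(n)}(D)$. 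These coincide only if both crossing jumps are trivial, and in this scheme every jump carries a jump of exactly $2/n$ in the corresponding Riemann invariant, so $L_b'\neq L_b$ always. Consequently the Riemann problems at $P_b$ and $P_t$ are not identical, and the conclusion you draw from their equality does not stand. In addition, the auxiliary fact you invoke at the end --- that at each vertex the incoming and outgoing jumps of the same family have the same strength (and sign) --- is essentially the content of the lemma itself, and the justification you offer (unbounded exterior regions connected above and below the vertex) is not valid, since the adjacent regions are in general bounded blocks; so that step is circular.

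There is also a misreading of the configuration, and it matters. In Figure \ref{fig_dnc} the edges $l_{SW}$, $l_{SE}$, $l_{NW}$, $l_{NE}$ are the four jump edges meeting at a single interaction point: the incoming and outgoing pieces of one forward and one backward jump (this is why $l_{SW}$ and $l_{NE}$ are the forward ones), and the lemma is precisely the statement, quoted in Step 2 of the proof of Lemma \ref{a_t_in}, that the R/C character does not change along each family's direction when jumps of opposite families cross. Under your reading --- opposite sides of a diamond block --- the assertion you set out to prove is actually false in general, because the two forward sides of a diamond block lie on two different forward jumps whose characters are independent; that independence is exactly what the subscripts in $R_r$ versus $R_c$ of Definition \ref{def_RC} encode. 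The paper's proof is a short computation at the vertex: constancy of $r^{(n)}$ across the two forward edges and of $s^{(n)}$ across the two backward edges for the four surrounding states, summed, gives $u^{(n)}_1-u^{(n)}_2=u^{(n)}_4-u^{(n)}_3$ and $u^{(n)}_1-u^{(n)}_4=u^{(n)}_2-u^{(n)}_3$, i.e. the $u$-jump across the outgoing edge of each family equals that across the incoming edge, which yields the claim via Definition \ref{def_RC}.
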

\begin{proof}
By \eqref{r_s_def} and (\ref{s_r_con_approx}), we have
\[
u^{(n)}_1-u^{(n)}_4=\Phi^{(n)}_4-\Phi^{(n)}_1 \quad\text{and}\quad 
u^{(n)}_3-u^{(n)}_2=\Phi^{(n)}_2-\Phi^{(n)}_3 \,,
\]
and
\[
u^{(n)}_1-u^{(n)}_2=\Phi^{(n)}_1-\Phi^{(n)}_2 \quad\text{and}\quad
u^{(n)}_3-u^{(n)}_4=\Phi^{(n)}_3-\Phi^{(n)}_4.
\]
Summing up these equations we show 
\beq\label{lemma_RC-dnc}
u^{(n)}_1-u^{(n)}_2=u^{(n)}_4-u^{(n)}_3\quad\text{and}\quad u^{(n)}_1-u^{(n)}_4=u^{(n)}_2-u^{(n)}_3,
\eeq  
with states $1$$\sim$$4$ given in Figure  \ref{fig_dnc},
hence the monotonicity of $u$ is preserved in both forward and backward directions, which is enough to prove this
lemma by Definition \ref{def_RC}. 
\end{proof}

\begin{definition}[R/C character in a district]
Given a block in certain type  (for example a $\fa R_r\ba R_r$ block),
we define  a \emph{district} in that type (for example a $\fa R_r\ba R_r$ district) to be the largest connected set which includes the given block and consists of only blocks in the same type.
\end{definition}

Finally,  by Remark \ref{last_v} we prove a lemma
showing the decay of ${v}$ in some direction in any diamond or district which is not $\fa R_r\ba R_r$, which will help us find the upper bound on ${v}$ in the next section.

	
\begin{lemma}\label{No-R}
If the forward character of a district $D$ is $\fa C$ or $\fa R_c$,
then $v$ on any block adjacent and above North-west boundary of D is not larger than the maximum $v$ values on
blocks adjacent and below South-East boundary of D. 

And all $\Phi^{(n)}$ values on blocks in $D$ are at most $1/n$ larger than the
maximum $\Phi^{(n)}$ value  on
blocks adjacent and below South-East boundary of D.

Symmetric decay of $v$ happens if the backward character of a district $D$ is $\ba C$ or $\ba R_c$.
\end{lemma}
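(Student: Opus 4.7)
The proof strategy uses Remark~\ref{last_v}, which governs how $v$ changes across backward jumps: for a $\fa C$ edge $v_b<v_a$, and for a $\fa R_c$ edge $v_b>v_a$ together with $v_{aa}=v_b$, meaning that the rarefactive step across a $\fa R_c$ edge is exactly cancelled by the compressive step across the subsequent $\fa C$ edge forced by the subscript $c$. Since $D$ has forward character $\fa C$ or $\fa R_c$, every backward jump edge contained in $D$ (or on its SW/NE boundary) is of one of these two types, and this dichotomy is globally available inside the district.

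For the interior bound on $\Phi^{(n)}$, I would fix any block $B\in D$ and connect $B$ to a block $B^-$ adjacent and below the SE boundary by a finite path of adjacent blocks inside $D$, stepping always in the ``behind'' direction of each backward jump crossed so that the $v_{aa}=v_b$ identity can be applied block-by-block. Each $\fa C$ step contributes a monotone change to $\Phi^{(n)}$ in the favourable direction, while each $\fa R_c$ step produces a $1/n$ increase that is compensated by the next $\fa C$ step (guaranteed to be present by the subscript $c$). The only uncompensated contribution along the path is at most one trailing $\fa R_c$ step of size $1/n$ whose $\fa C$ partner lies outside the traversed portion. This yields the desired bound $\Phi^{(n)}_B\le\max_{B^-}\Phi^{(n)}_{B^-}+1/n$.

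For the sharper $v$-bound on a block $B^+$ adjacent and above the NW boundary, I would extend the path to start at $B^+$ outside $D$ and cross into $D$ through the NW boundary before proceeding to $B^-$ as above. The crucial additional input is the maximality of the district: a $\fa R_c$ edge in $D$ cannot have its compensating $\fa C$ edge lying outside $D$, since otherwise the block on the other side of the boundary would still share the same forward-character label and would therefore be absorbed into $D$ by maximality, contradicting its placement outside. Hence along the extended walk every $\fa R_c$ is paired with a $\fa C$ still inside $D$, the $1/n$ slack disappears, and $v_{B^+}\le\max_{B^-}v_{B^-}$ follows. The symmetric statement for backward character $\ba C$ or $\ba R_c$ is immediate by interchanging the roles of the two families in the entire argument. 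The principal obstacle is formalising the walk for a district of arbitrary polygonal shape and rigorously verifying the $\fa R_c$-$\fa C$ pairing near the outer boundary, which rests on Lemma~\ref{RC-dnc} and the maximality defining the district.
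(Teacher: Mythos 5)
The paper never actually proves Lemma \ref{No-R} (it is asserted as an immediate consequence of Remark \ref{last_v}, with only the heuristic remark about the smooth case), so you are supplying a missing argument rather than mirroring one. Your high-level shape is right: telescope $\pm 1/n$ steps of $\Phi^{(n)}$ along a walk from below the South-East boundary into $D$ (and on through the North-West boundary), let the mixed type cancel the single rarefactive step, and note that only the excluded type $\fa R_r$ produces uncancelled increases. But the mechanism you describe has two genuine flaws. First, you misread Remark \ref{last_v} as governing how $v$ changes \emph{across a backward jump edge according to its forward label}. It does not: across a backward edge $s^{(n)}$ is constant, so $\Phi^{(n)}$ jumps by $\pm1/n$ with sign given by that edge's \emph{own backward} character, which the hypothesis of the lemma does not constrain at all (the district may well be $\fa C\,\ba R_r$). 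The $\fa$ label attached to a backward edge records the characters of the two forward fronts flanking it, and the inequalities $v_b\gtrless v_a$ and the identity $v_{aa}=v_b$ in Remark \ref{last_v} are statements about $v$ across those \emph{forward} fronts: behind a forward rarefactive front $\Phi^{(n)}$ is $1/n$ larger than ahead, behind a compressive one $1/n$ smaller, and for $R_c$, $C_r$ the two consecutive forward steps cancel. Hence the walk in your bookkeeping must cross forward edges only (move purely in the NW--SE direction); a path that ``steps in the behind direction of each backward jump crossed'' accumulates $\pm1/n$ contributions of the backward family that your accounting never sees.

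Second, the maximality argument in your last paragraph is incorrect. The two letters of the forward label are the characters of the two forward fronts bounding a block (its South-East and its North-West forward edges), so the neighbour across a forward edge shares only one of them; for the mixed types $\fa R_c$, $\fa C_r$ that neighbour necessarily carries a \emph{different} label. Thus the compressive front compensating an $\fa R_c$ block is typically the North-West boundary of $D$ itself, with the block beyond it lying outside $D$ -- no contradiction with maximality arises, and none is needed. What the correct argument uses instead is: (i) two consecutive edges of one backward jump have the subscript of the first equal to the capital of the second, so no two blocks of a mixed-type district are adjacent across a forward edge; such a district is one forward tube wide, every block's South-East forward edge already lies on the South-East boundary, which gives $\Phi^{(n)}\le\max_{SE}\Phi^{(n)}+1/n$ inside $D$, and crossing the compressive North-West front brings $v$ back to at most the South-East value; (ii) for $\fa C_r$ or $\fa C_c$, follow South-East neighbours inside $D$: every crossing is compressive, so $\Phi^{(n)}$ strictly increases toward the South-East boundary, reached in finitely many steps, and the North-West crossing adds at most $+1/n$ only in the $C_r$ case, which is again repaid by the compressive step just taken. (Also, a district has a single type by definition, so no mixture of $\fa C$ and $\fa R_c$ edges inside $D$ needs to be handled.) As written, your proof would not survive these corrections without being restructured along the lines above.
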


This lemma is corresponding to a fact in smooth solution that density increases along a characteristic which is  passing through a compressive wave of the other characteristic family, respectively.

%
\section{The lower bound on density in the scheme}
The key idea in the proof of Theorem \ref{main0} is to define a function $a^{(n)}(t)$ for any $n$, which is not decreasing on $t$. 
The monotonicity of $a^{(n)}(t)$ will finally lead to a lower bound on density, 
under the help of the local convergence theorem for the polygonal scheme in \cite{lxy}.
In this section, we always assume that jumps in the same family do not interact. 

To define $a^{(n)}(0)$,
in the first step, we modify some blocks adjacent to initial line $t=0$ to diamonds, as in Figure \ref{scheme5} and left picture of Figure \ref{scheme}. After modification, we call all interior diamonds and boundary diamonds as complete diamonds, also shown in Figure \ref{scheme5}.

	\begin{figure}[htp] \centering
		\includegraphics[width=.6\textwidth]{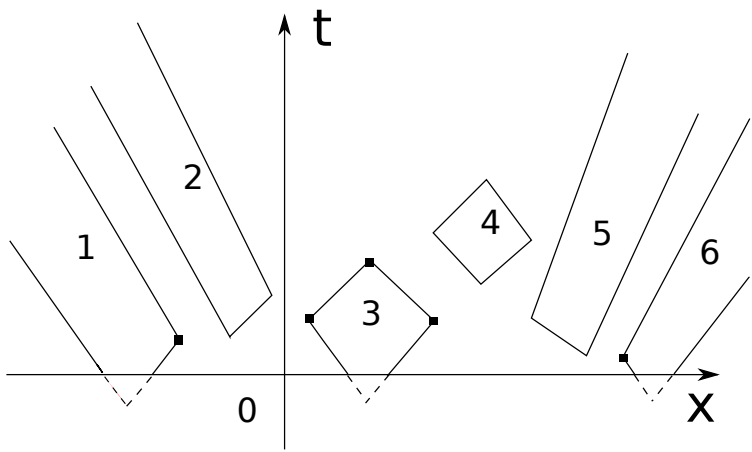}
		\caption{\label{scheme5}
		Modify blocks into diamonds:  interior block 3 (pentagon whose lower boundary is on initial line);  boundary block 1 or 6 which includes one intersection point between jump edges. 
		After modifications, diamonds 1$\sim$6 are all called complete diamonds.
		}
	\end{figure}

\begin{definition}\label{a_0_def}	
Call the collection of all complete diamonds (after modification) as $\mathcal{CD}$. The lowest boundary of 
$\mathcal{CD}$ is a polygonal line, denoted by $t=L^{(n)}_0(x)$. It is clear that $t=L^{(n)}_0(x)$
consists of finitely many jump edges. See Figure \ref{scheme}.

A jumpy edge $JE_i$ on  $t=L^{(n)}_0(x)$ is said in $L^{(n)}_{0,R}$ if it is
$\fa R_r$ or $\ba R_r$. We define the length of the propagation of $JE_i$ onto $x$-axis as
  $a^{(n)}_i(0)$. Now we define
  \[
  a^{(n)}(0)=\min_{JE_i\in L^{(n)}_{0,R}}a^{(n)}_i(0)\,.
  \]
  
  For $T>0$, it is not necessary to modify the diamond again. We thus define $ a^{(n)}(T)$ in a similar way.
\end{definition}

	\begin{figure}[htp] \centering
		\includegraphics[width=.4\textwidth]{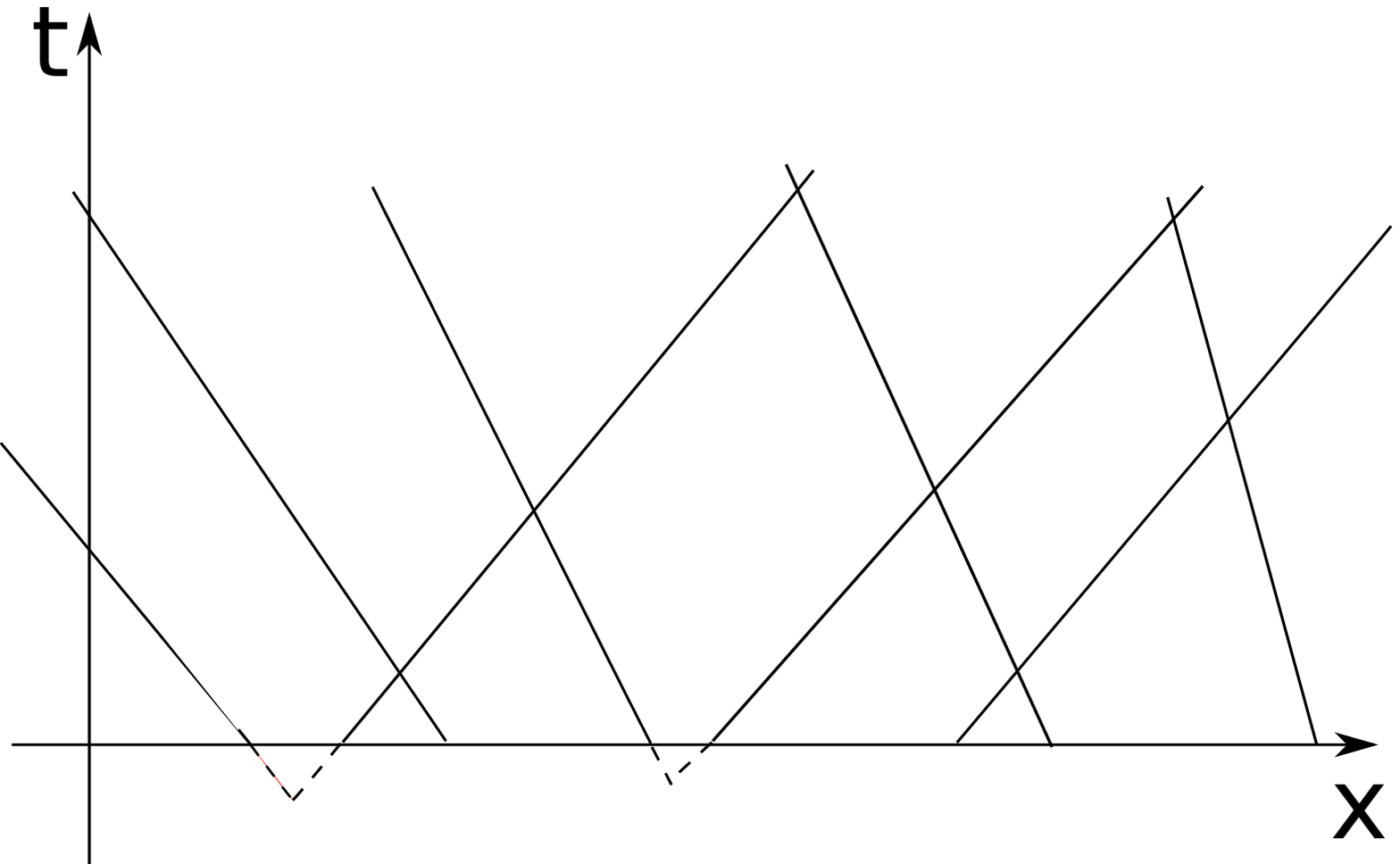}
		\qquad	\includegraphics[width=.4\textwidth]{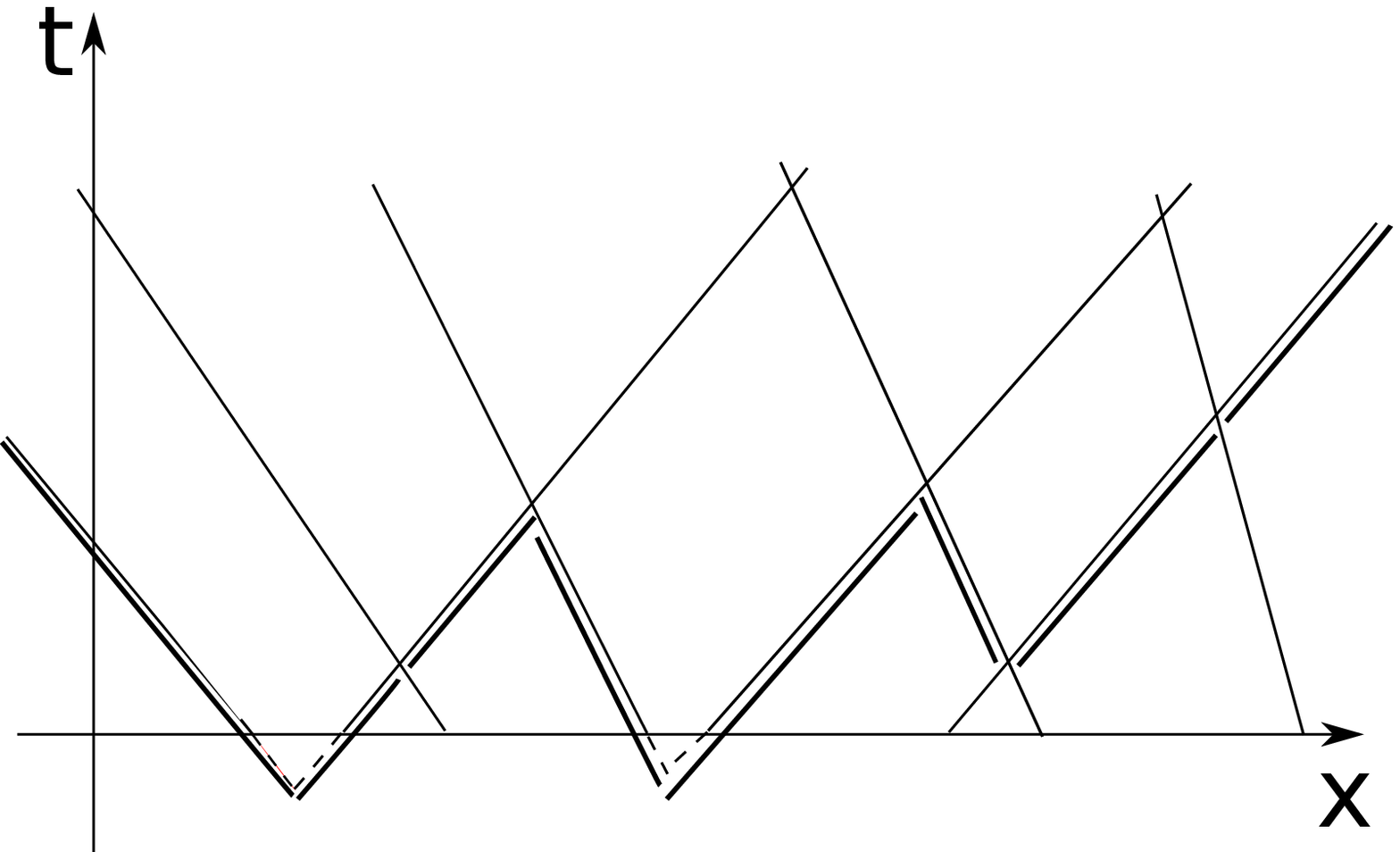}
		\caption{\label{scheme}Left: Modify blocks into diamonds; Right: the definition of $t=L^{(n)}_0(x)$.}
	\end{figure}

\begin{lemma} \label{lemma_a_0}
Assume the initial density $\rho^{(n)}(x,0)$ has positive upper and lower bounds, then
the density $\rho^{(n)}\big(x,L^{(n)}_0(x)+\big)$ has positive lower and upper bounds. And
\beq\label{a_0_M_0}
\lim_{n\rightarrow\infty}\frac{1}{n\, a^{(n)}(0)}\leq M_0 J\,,
\eeq
where
\beq\label{Jdef}
J=\max_{x\neq y}\Big\{\frac{s_0(x)-s_0(y)}{x-y},\frac{r_0(x)-r_0(y)}{x-y}\Big\}
\eeq
 $M_0$ is a positive constant depending on the uniform lower bound on initial density, but $M_0$ is  independent of $n$. 
\end{lemma}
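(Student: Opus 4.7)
The plan is to prove the two assertions in sequence, both resting on a single structural observation about the polygonal scheme: by \eqref{G_def} and the pressure law \eqref{pga}, the mesh spacing satisfies $v^{(n)}_{k+1}-v^{(n)}_k=\Theta(1/n)$ as long as $v$ ranges in a fixed compact sub-interval of $(0,\infty)$, so \eqref{geng2}--\eqref{geng3} yield uniform positive upper and lower bounds on every jump slope $|\ba\lambda|, |\fa\lambda|$ in terms of the bounds of $v$ alone.

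To prove the density bounds at $L_0^{(n)}$, I first note that the modification shown in Figures \ref{scheme5} and \ref{scheme} adjusts only finitely many blocks per initial discontinuity and does not introduce new wave interactions. Between the initial line and $L_0^{(n)}$, each point traverses at most $O(1)$ jump edges, and each traversal shifts the index $j$ of the standard state by $\pm 1$, perturbing $v^{(n)}$ by $\Theta(1/n)$. Accumulating a bounded number of such increments shows that $v^{(n)}(x, L_0^{(n)}(x)+)$ differs from $v^{(n)}_0(x)$ by $O(1/n)$; for sufficiently large $n$ this perturbation is harmless, and the uniform upper and lower bounds on $v^{(n)}_0$ inherited from $v_0$ by the uniform convergence noted before \eqref{J_est} propagate to the upper side of $L_0^{(n)}$.

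For the bound on $a^{(n)}(0)$, any rarefaction edge in $L_{0,R}^{(n)}$ lies on a jump emitted at some initial point $x^{(n)}_\alpha$ and is terminated at its first encounter with a jump emitted at the adjacent point $x^{(n)}_{\alpha\pm 1}$. A direct intersection computation between two straight lines with slopes $\lambda_1,\lambda_2$ of opposite sign gives an $x$-projection equal to $\frac{|\lambda_1|\,(x^{(n)}_{\alpha+1}-x^{(n)}_\alpha)}{|\lambda_1|+|\lambda_2|}$, and the uniform slope bounds force this quantity to exceed $c_1(x^{(n)}_{\alpha+1}-x^{(n)}_\alpha)$ for a positive constant $c_1$ depending only on the bounds of $v$. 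Taking the minimum over rarefactive edges and invoking \eqref{J_est}, which yields $\min_\alpha(x^{(n)}_{\alpha+1}-x^{(n)}_\alpha)=\frac{2}{nJ}(1+o(1))$ as $n\to\infty$, I conclude $a^{(n)}(0)\geq \frac{2c_1}{nJ}(1+o(1))$, which is \eqref{a_0_M_0} with $M_0=1/(2c_1)$.

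The main obstacle will be the geometric bookkeeping for the modified boundary and pentagonal blocks in Figure \ref{scheme5}: the ``extended'' rarefaction edges may terminate at virtual vertices lying below $t=0$, so one must verify that these extensions still obey the slope bounds above and that their $x$-projections remain comparable to the nearest initial spacing $x^{(n)}_{\alpha+1}-x^{(n)}_\alpha$. Once a case-by-case inspection of the interior-pentagon and boundary-block modification patterns confirms this (treating the extension as a rigid extrapolation along lines whose slopes are already controlled), the remaining estimates are elementary and the lemma follows.
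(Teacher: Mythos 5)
Your density bounds on $L_0^{(n)}$ and the uniform slope bounds from \eqref{G_def}, \eqref{geng2}--\eqref{geng3} are fine and consistent with the paper's (much briefer) argument, and your intersection computation giving an $x$-projection at least $c_1(x^{(n)}_{\alpha+1}-x^{(n)}_\alpha)$ for edges meeting the initial line is essentially the paper's constant $M$. The genuine gap is in the step that produces $J$: you replace the \emph{local} spacing by the \emph{global} minimum $\min_\alpha(x^{(n)}_{\alpha+1}-x^{(n)}_\alpha)$ and identify it with $\frac{2}{nJ}(1+o(1))$ by reading \eqref{J_est} literally. That identification fails for general data: uniform convergence of $(u^{(n)}_0,v^{(n)}_0)$ forces the jump points to be spaced roughly $\frac{2}{n\Lambda}$ apart wherever the initial Riemann invariants \emph{decrease} with slope $\Lambda$, while $J$ in \eqref{Jdef} is a one-sided quantity that ignores compressions. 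So if the data contain compressions steeper than the most rarefactive slope (exactly the interesting case, cf.\ Corollary \ref{cor_main}), then $\min_\alpha(x^{(n)}_{\alpha+1}-x^{(n)}_\alpha)\approx\frac{2}{n\Lambda}\ll\frac{2}{nJ}$, and your chain only yields $\frac{1}{n\,a^{(n)}(0)}\lesssim\Lambda$, i.e.\ a bound by the two-sided Lipschitz constant, not $M_0J$. (Taken literally, \eqref{J_est} would forbid such closely spaced jumps, but then the approximation could not converge uniformly for such data; the paper's own proof uses it only through local, rarefactive difference quotients, which is the correct reading.)

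What is missing is precisely the local use of the rarefactive character. In the paper's proof, a rarefactive edge $l_1$ on $L^{(n)}_0$ lies in a tube bounded by two adjacent jumps of the opposite family; its projection $\Delta_{l_1}x$ is comparable, with a constant $M$ depending only on the density bounds, to the length $|B_j|$ of the initial segment of that tube, and because the edge is $R_r$ the neighboring tubes carry $r=\frac{2(k\pm1)}{n}$, so $r_{B_{j+1}}-r_{B_{j-1}}=\frac4n$. This gives $\frac{2/n}{\Delta_{l_1}x}\le M\,\frac{r_{B_{j+1}}-r_{B_{j-1}}}{|B_j|}$, where the right-hand side is a \emph{one-sided} (rarefactive) difference quotient of the approximate initial data and hence is controlled by $J$ in the limit. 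In your proposal the rarefactive character is used only to select which edges enter $a^{(n)}(0)$, never to control the local spacing or quotient, so the link to $J$ rather than to the full Lipschitz constant is lost; this is the step to repair (bound the spacing of the two opposite-family jumps enclosing the rarefactive edge by the local one-sided quotient). Separately, for edges of $L^{(n)}_0$ that do not reach $t=0$ (created by the diamond modification), the paper translates them downward along a parallel line and invokes the opening-up monotonicity of Step 2.1 in Lemma \ref{a_t_in}; your last paragraph flags this issue but leaves it as an unverified case-by-case inspection.
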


	\begin{figure}[htp] \centering
	\includegraphics[width=.5\textwidth]{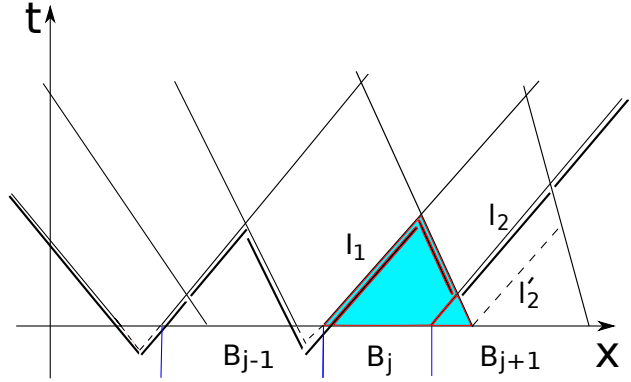}
		\caption{\label{scheme4}Proof of Lemma \ref{lemma_a_0}. $l_2$ and $l'_2$ are parallel with each other.}
	\end{figure}
\begin{proof}
The first claim is clearly true because each state below the curve $t=L_0(x)$ is an initial state in the scheme.

To prove \eqref{a_0_M_0},  it is enough to show that for any jump edge $PQ$ with endpoints $P$ and $Q$ on the curve $t=L^{(n)}_0(x)$, we have 
\beq\label{le_p_1}
\frac{1}{n|x_P-x_Q|}\leq M_0 J+\ve_n
\eeq
where $\ve_n$ goes to zero as $n$ goes to infinity.

We divide jump edges on the curve $t=L^{(n)}_0(x)$ into two types: 
jump edge intersecting with the initial line and jump edge not intersecting with the initial line. 
Without loss of generality, we only consider two forward jump edges $l_1$ and $l_2$ in Figure 
\ref{scheme4}. 

We denote three line segments on $t=0$ divided by adjacent backward jumps as $B_{j-1}$, $B_{j}$ and
$B_{j+1}$, respectively, which are shown in Figure \ref{scheme4}. 

By \eqref{s_r_con_approx}, $l_1$  is ${\ba R}_r$, Definition \ref{def_RC} and the 
discussion in Subsection \ref{subsec_2.1}, we have $r=\frac{2(k-1)}{n}$, $\frac{2k}{n}$ and 
$\frac{2(k+1)}{n}$ in backward tubes, which mean all connected blocks divided by two adjacent backward jumps, including $B_{j-1}$, $B_{j}$ and
$B_{j+1}$, respectively, where $k=\frac{1}{2}(i+j)$ for some integers $i$ and $j$.

By studying the shaded triangle in Figure
\ref{scheme4}, it is easy to get that  there exists a constant $M$ only depending on the upper and lower bounds of initial density, 
such that
\[M\geq\frac{|B_j|}{\Delta_{l_1} x}\]
as $n$ large enough, where  $\Delta_{l_1}x$ means the length of projection of $l_1$ onto the line $t=0$. 
Then we have 
\beq\label{le_p_2}
\frac{\frac{2}{n}}{\Delta_{l_1} x}\leq M\frac{r_{B_{j+1}}-r_{B_{j-1}}}{|B_j|}\,.
\eeq
 Hence \eqref{le_p_1} is clearly correct, because the right hand side of \eqref{le_p_2} is bounded above by $J$ as $n$ goes to infinity, where we use \eqref{J_est}. 

For $l_2$, we note that $\Delta_{l_2}x\geq\Delta_{l'_2}x$ where $l'_2$ is parallel to $l_2$, 
because $l_2$ is ${\ba R}_i$ hence two backward jumps enclosing  $l_2$ opens up. 
A more detailed argument on this fact can be found in Step 2.1 in the proof of Lemma \ref{a_t_in}.  Now we change the problem to a problem for $l'_2$ which intersects with $t=0$, hence similar
as the case for $l_1$, we can prove  \eqref{le_p_1}.

We complete the proof of the lemma.
\end{proof}

The following lemma plays a key role in this paper.
\begin{lemma}\label{a_t_in}
Suppose the scheme is well-defined, then $a^{(n)}(t)$ is not decreasing on $t$.
\end{lemma}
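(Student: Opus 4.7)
My plan is to show that $a^{(n)}(t)$ can only change at interaction times, and at each such instant the minimum of rarefactive $x$-projections cannot decrease. Concretely, fix an interaction at $(x_0,t_0)$, the south vertex of some diamond. As $t$ crosses $t_0$, the polygonal line $L^{(n)}_t$ is modified only near $(x_0,t_0)$: its two incoming edges, which terminate at $(x_0,t_0)$, are replaced by the two outgoing edges emanating from $(x_0,t_0)$, while every other edge of $L^{(n)}_t$ is unaffected. It thus suffices to show that every rarefactive outgoing edge has $x$-projection at least the minimum $x$-projection of the rarefactive incoming edges at this interaction (or at least $a^{(n)}(t_0^-)$ if there is none).

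The analysis proceeds by case distinction on the R/C character of the two incoming edges. Applying Lemma~\ref{RC-dnc} to the diamond with south vertex $(x_0,t_0)$ determines how R/C character propagates through a diamond: the two forward edges share the same R/C type, and similarly for the backward pair. Together with the structure of the Riemann problem from Subsection~\ref{sub_sec_RP}, this fixes the R/C characters of the outgoing edges in each sub-case. The slopes of all four edges of the diamond are then given by the explicit formulas \eqref{geng2}--\eqref{geng3} in terms of standard-state differences $\delta_k^{(n)}$. Since a rarefactive edge always satisfies $v_b>v_a$ (Remark~\ref{last_v}), its associated indices lie on the rarefactive side of $\{v_k^{(n)}\}$; comparing these indices between incoming and outgoing rarefactive edges bounds the outgoing slope magnitude by the incoming one. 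Combined with a comparison of their time extents, obtained by locating the endpoints among neighboring interactions, this should yield the desired projection inequality in the rarefaction-rarefaction and mixed rarefaction-compression sub-cases.

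The main obstacle, which I expect to dominate the argument, is the case in which both incoming edges are compressive yet an outgoing edge is rarefactive. Here no local rarefactive predecessor exists and the local bookkeeping breaks down. The plan is to trace the newborn rarefactive edge backward along its own characteristic family through preceding diamonds, using Lemma~\ref{No-R} to control the growth of $v$ across any compressive districts it traversed, and thereby produce a rarefactive ancestor lower in the scheme whose $x$-projection bounds the new edge's projection from below. Once this global tracing argument is in place, piecing the per-interaction bounds together over all interactions shows that $a^{(n)}(t)$ never decreases, as asserted.
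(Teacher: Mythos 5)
Your overall strategy---localizing the change of $a^{(n)}(t)$ to the interaction times where the lower boundary of the selected diamonds updates, and using Lemma~\ref{RC-dnc} to propagate R/C character---matches the paper's Steps 1--2. But the quantitative heart of the lemma is missing from your proposal. The paper reduces to comparing the $x$-projections of the two south edges versus the two north edges of a single constant-state diamond block, where all four slopes are explicit from \eqref{geng2}--\eqref{geng3}, and then needs two genuinely different arguments: for $\fa R_r\ba C$ and $\fa R_r\ba R_c$ diamonds, the slope inequalities \eqref{lemma_rc_proof} together with a parallelogram construction give \eqref{lemma_rc}; for $\fa R_r\ba R_r$ diamonds the second inequality in \eqref{lemma_rc_proof} reverses, the geometric argument fails, and one must instead use Lin's identity writing $B_{NW}$ and $B_{NE}$ as convex combinations of $B_{SW},B_{SE}$ with weights $\tfrac12\pm\tfrac{\alpha}{2}$, $\alpha=\lambda_{NW}/\lambda_{SE}\in(0,1)$, whence $\min(B_{NW},B_{NE})>\min(B_{SW},B_{SE})$. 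Your sketch replaces all of this with ``the outgoing slope magnitude is bounded by the incoming one, combined with a comparison of time extents, which should yield the projection inequality.'' That is not an argument, and the slope claim is false in the mixed case: after a rarefactive forward edge crosses a backward compression the relevant $v$-indices decrease, $\delta_k^{(n)}$ decreases, and the outgoing edge is \emph{faster}; while in the $\fa R_r\ba R_r$ case the outgoing edges are strictly slower, so a slope comparison by itself points the wrong way and the averaging identity is exactly what rescues the projections. Note also that the correct local unit is the constant-state diamond block (its two lower versus two upper edges), not a single interaction point: the projection of an edge emanating from an interaction is determined by where it meets the next jump of the opposite family, so your per-interaction inequality cannot be established from data at that interaction alone.

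Moreover, the case you expect to dominate the proof---a rarefactive outgoing edge both of whose incoming edges are compressive---cannot occur, by the very Lemma~\ref{RC-dnc} you invoke: the outgoing edge of each family has the same R/C type (capital letter and subscript) as the incoming edge of that family, so an outgoing $\fa R_r$ or $\ba R_r$ edge always has a rarefactive same-family predecessor at the same interaction. The global backward-tracing argument you plan for this case is therefore unnecessary, and it is also inapt: Lemma~\ref{No-R} controls $v$ and $\Phi^{(n)}$ on districts and yields no lower bound on the $x$-projection of a jump edge. The effort should instead be spent on the per-diamond estimates described above, in particular the $\fa R_r\ba R_r$ case, which is where the real difficulty of the lemma lies.
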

\begin{proof}
We divide our proof into several steps. For convenience, we omit the subscript $(n)$ in the proof
of this lemma.
\paragraph{\bf Step 1}
Suppose jump edges $P_u P_l$ and  $P_{u'} P_l$
are two lower boundaries of a diamond $\Omega$, where 
$P_u$, $P_{u'}$ and $P_l$ are three vertexes of $\Omega$. And $P_l$ is 
the lowest vertex of the diamond.

Then we prove that: for any time $T\geq0$, $P_u P_l$ and  $P_{u'} P_l$ are either {\bf both in} or {\bf both not in}  the 
collection of selected diamonds at time $T$.



Actually, if $P_l$ is in the region $t\geq T$, then $\Omega$ is a selected diamond.

If $P_l$  is not in the region $t\geq T$, then diamonds containing $P_u P_l$ or $P_{u'} P_l$ as north boundary are not selected, so except these diamonds, the only diamond including $P_u P_l$ or $P_{u'} P_l$ is $\Omega$. Hence $P_u P_l$ and $P_{u'} P_l$ are either both selected or both not selected.

\bigskip

%
\paragraph{\bf  Step 2}
Recall that the R/C characteristic does not change along forward and backward characteristic directions.
Also using the claim proved in the step 1, to prove the lemma, we only have to show that, in any diamond, the minimum difference in $x$ for any 
 $\fa R_r$ and $\ba R_r$ south jump edges is less or equal to 
the minimum difference in $x$ for any  $\fa R_r$ and $\ba R_r$ north jump edges.

We discuss case by case for diamonds including $\fa R_r$ or $\ba R_r$.

\paragraph{\bf (2.1)}
We consider a $\fa R_r \ba C$ or a $\fa R_r \ba R_c$ diamond shown in Figure \ref{RCRRo}. We use
subscripts $E$, $W$, $N$ and $S$ to denote functions related to east, west, north and south endpoints of the diamond, respectively. And
we use
subscripts $NE$, $NW$, $SE$ and $SW$ to denote functions related to North-East, North-West, South-East and South-West boundary
jump edges of the diamond, respectively. Especially we use $l_{NE}$, $l_{NW}$, $l_{SE}$ and $l_{SW}$ to denote the North-East, North-West, South-East and South-West boundary
jump edges of the diamond.

We want to show that 
\beq\label{lemma_rc}
x_E-x_N\geq x_S-x_W.
\eeq

	\begin{figure}[htp] \centering
			\includegraphics[width=.28\textwidth]{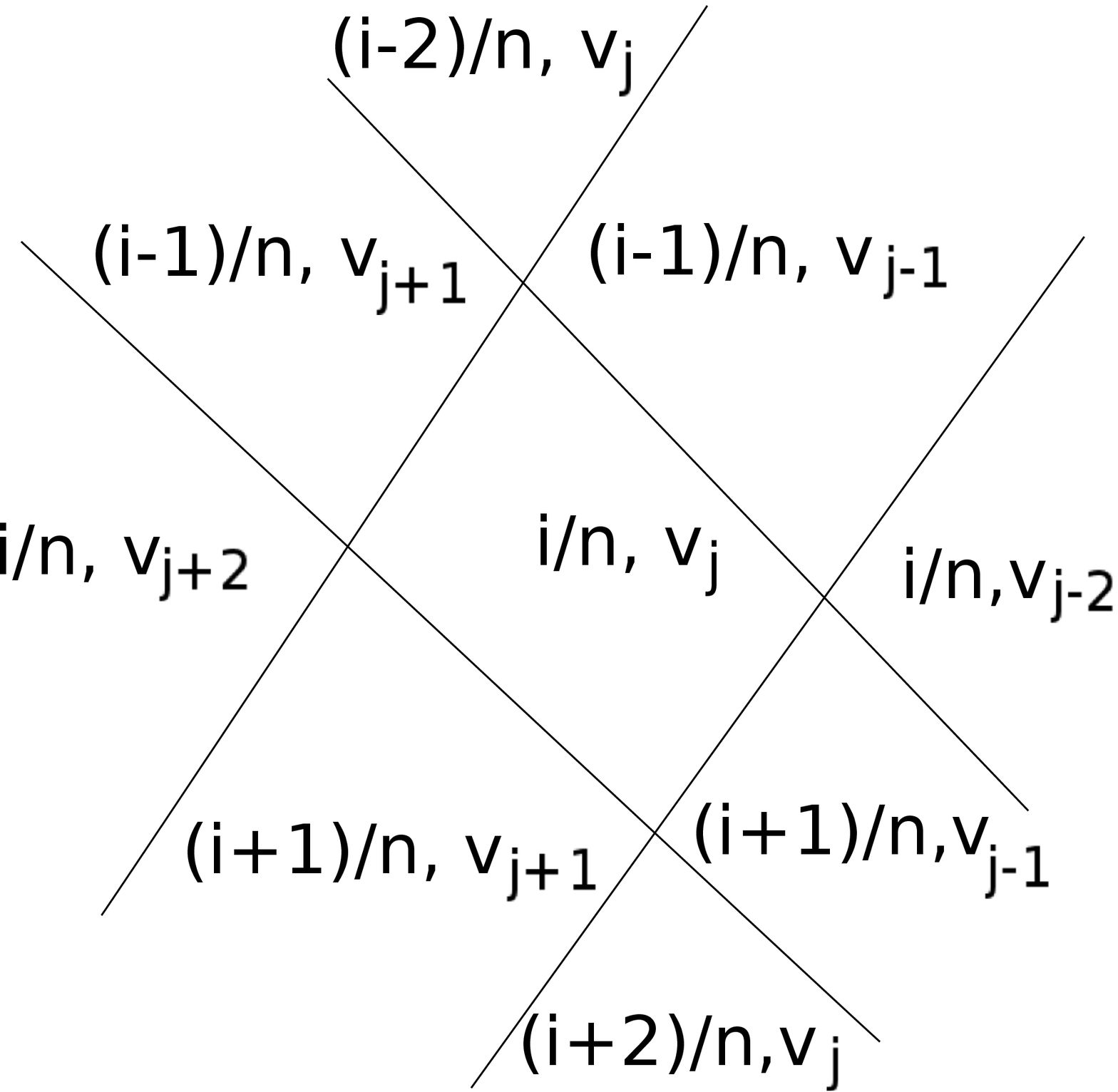}
			\qquad \includegraphics[width=.28\textwidth]{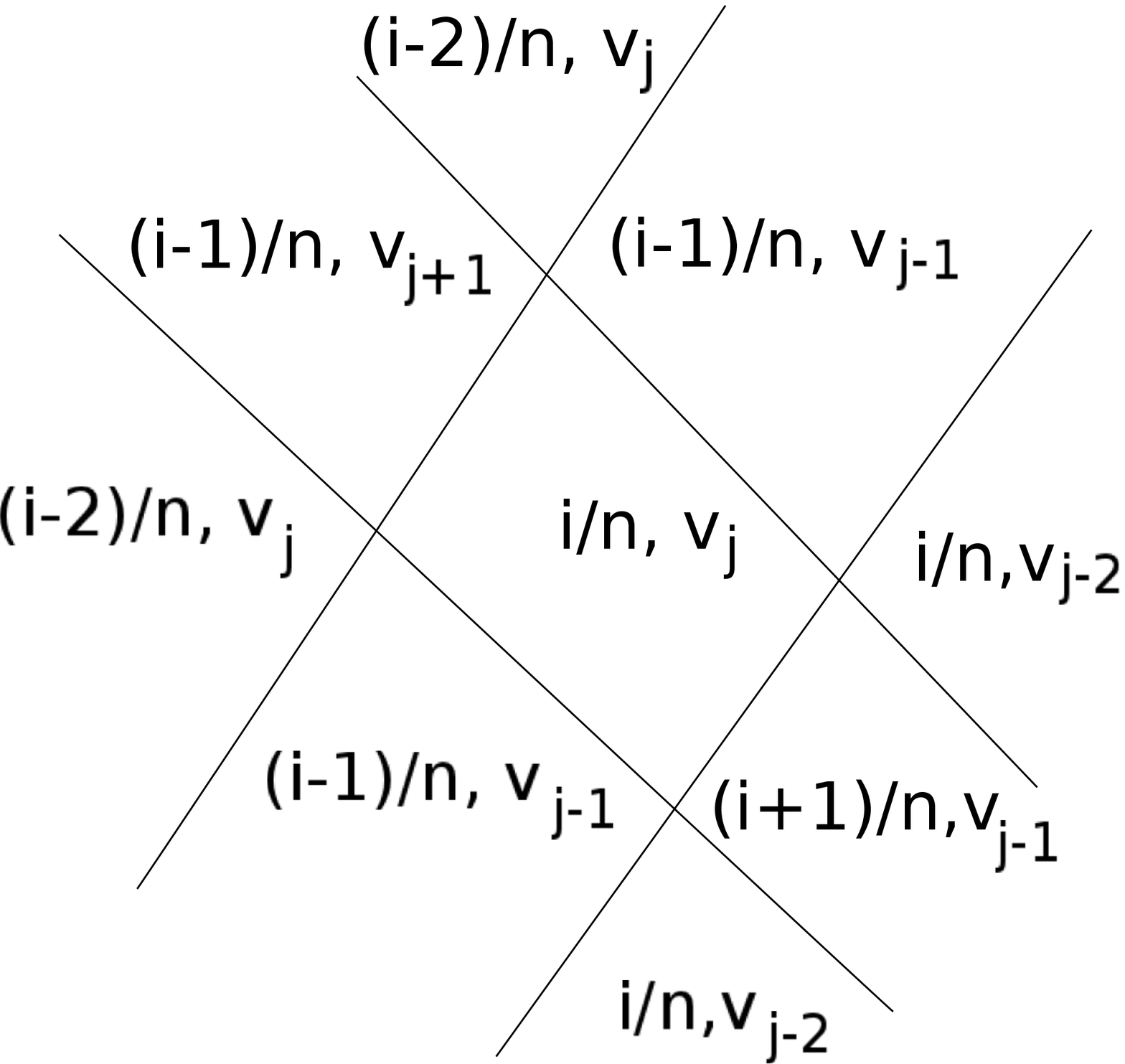}
			\qquad \includegraphics[width=.28\textwidth]{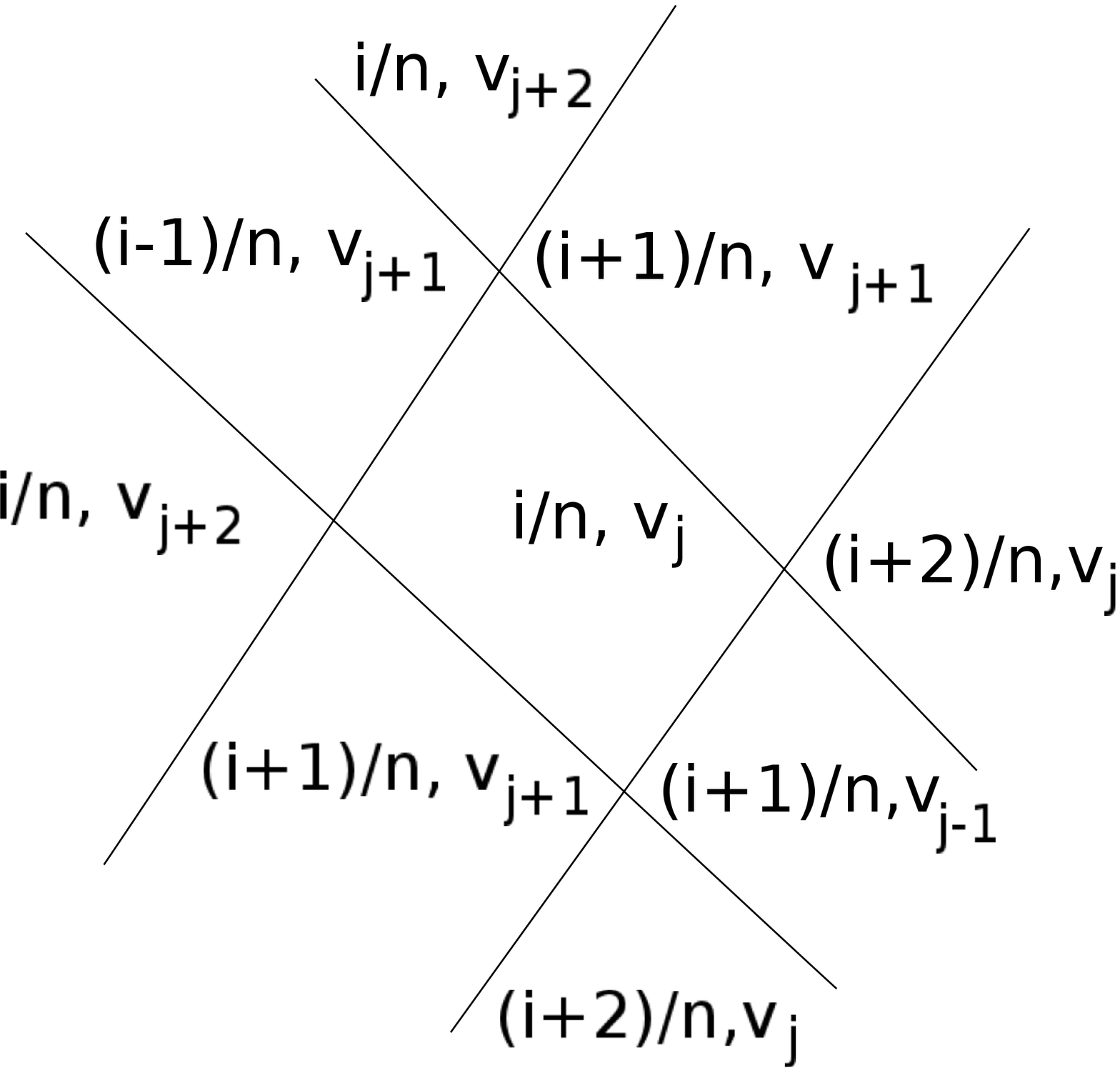}
		\caption{\label{RCRRo}From left to right: the middle diamonds are: $R_rC_c$; $R_r C_r$; and  $R_rR_c$ diamonds, respectively, where forward character always goes first. }
	\end{figure}

By studying the three possible cases in Figure \ref{RCRRo} using 
\eqref{geng2}$\sim$\eqref{geng3}, we always have 
\beq\label{lemma_rc_proof}
0<\frac{\lambda_{NW}}{\lambda_{SE}}<1,\qquad 1\leq\frac{\lambda_{NE}}{\lambda_{SW}},
\eeq
where recall that subscripts $N$, $S$, $W$, $E$ denote the north, south, west and east endpoints of the middle
diamond in the figure, respectively.

Then we could give an easy geometric proof for \eqref{lemma_rc}  in Figure \ref{lemma_proof}.
In fact,  drawing two dash lines parallel to $l_{NW}$ and $l_{SW}$, respectively, then by \eqref{lemma_rc_proof}, 
we know the parallelogram is inside the diamond, although in $\fa R_r \ba C_r$ and  $\fa R_r \ba R_c$ diamonds in Figure \ref{RCRRo} one edge of the parallelogram lies on $l_{NE}$. Then by Figure \ref{lemma_proof},
clearly  \eqref{lemma_rc} is correct.

	\begin{figure}[htp] \centering
			\includegraphics[width=.3\textwidth]{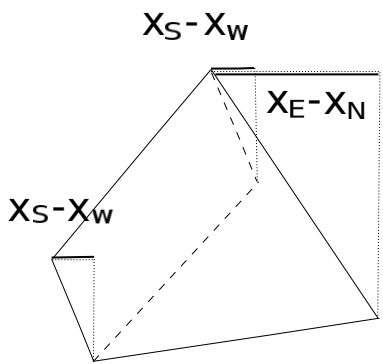}
		\caption{\label{lemma_proof}The proof of  \eqref{lemma_rc} on a diamond satisfying \eqref{lemma_rc_proof}.}
	\end{figure}

\bigskip

\paragraph{\bf (2.2)}
However, the easy geometric proof in the previous part is not correct for $\fa R_r \ba R_r$ interaction where the second
inequality in \eqref{lemma_rc_proof} is in the opposite direction. Instead we use another method to cope with a $\fa R_r \ba R_r$ diamond. 

We use similar idea as the one in \cite{lin2}. 

	\begin{figure}[htp] \centering
			\includegraphics[width=.3\textwidth]{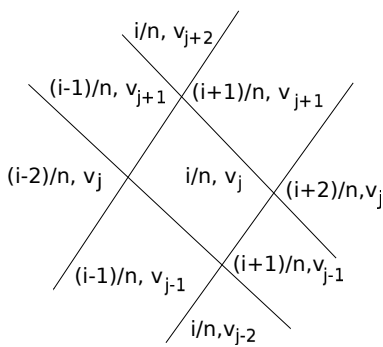}
		\caption{\label{RR}The center diamond is a $R_rR_r$ block.}
	\end{figure}

We use $B$ and $D$ to denote the difference in $x$ and $t$ for each adjacent pair of endpoints on the diamond, respectively, such as 
\beq
B_{NW}=|x_N-x_W|,\qquad D_{NW}=|t_N-t_W|,
\eeq
so clearly we have
$$B_{NW}+B_{NE}=B_{SW}+B_{SE},$$
$$D_{NW}+D_{SW}=D_{NE}+D_{SE},$$
and 
$$
\frac{B_{NW}}{D_{NW}}=\frac{B_{NE}}{D_{NE}}=\lambda_{NW}<\lambda_{SE}
=\frac{B_{SW}}{D_{SW}}=\frac{B_{SE}}{D_{SE}}\,.
$$

Denote that 
\[
0<\alpha=\frac{\lambda_{NW}}{\lambda_{SE}}<1,
\]
then we have 
$$B_{NW}=(\frac{1}{2} +\frac{1}{2}\alpha)B_{SE}+(\frac{1}{2} -\frac{1}{2}\alpha)B_{SW},$$
and 
$$B_{NE}=(\frac{1}{2} +\frac{1}{2}\alpha)B_{SW}+(\frac{1}{2} -\frac{1}{2}\alpha)B_{SE},$$
hence we have
\beq
\min(B_{NW}, B_{NE})>\min(B_{SW}, B_{SE}),
\eeq
which is the estimate we need.

Combining all information we have, we already finished the proof of this lemma.
\end{proof}
\bigskip

Finally we prove Theorem \ref{main0}.
\begin{proof}
We prove Theorem \ref{main0} in two steps.
\paragraph{\bf  Step 1}
First we show that: Suppose the polygonal scheme is well-defined when $0<t\leq T^*$. Then we have,
when $n$ is sufficiently large,
\beq\label{main_esti}
{v}^{(n)}(x,t)\leq \max_x{{v}^{(n)}_0(x)}+L\cdot t\com{when} 0<t\leq T^*
\eeq
for a uniform constant $L$ independent of $n$.

By Lemma \ref{No-R}, we know  that ${v}^{(n)}$ is not increasing along some direction in any districts except
$\fa R_r\ba R_r$ districts.
Hence in these districts clearly we have 
\beq{\label{main_proof_1}}
\max_{\text{in whole district}}{{v}^{(n)}}\leq\max_{\text{on lower boundary of the district}}{{v}^{(n)}}+\frac{1}{n}
\eeq

	\begin{figure}[htp] \centering
			\includegraphics[width=.1\textwidth]{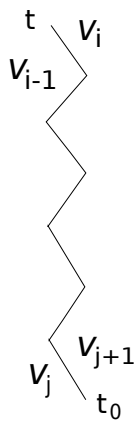}
		\caption{\label{finalproof}Bound on ${v}^{(n)}$ in a $R_rR_r$ district. In the figure, we omit the subscript ${(n)}$ for convenience.}
	\end{figure}

Next, in the $\fa R_r\ba R_r$ district, we will use the result obtained in Lemma \ref{a_t_in} that $a^{(n)}(t)$ is not decreasing on $t$
to prove \eqref{main_esti}. To see it, choose a block with ${v}={v}_i^{(n)}(t)$ inside this block, then trace it back to a 
block with ${v}={v}^{(n)}_j(t_0)$ on the lower boundary of the considered $\fa R_r\ba R_r$ district by a series of jump edge shown in Figure \ref{finalproof}. The ${v}$ values in Figure \ref{finalproof}
are given according to a fact that in the $\fa R_r\ba R_r$ district if ${v}^{(n)}={v}^{(n)}_k$ ahead of a jump edge
then ${v}^{(n)}={v}^{(n)}_{k+1}$ behind that jump edge. 
Then by \eqref{geng2}$\sim$\eqref{geng3} and the definition of $a^{(n)}(t)$, we have 
\[
t-t_0\geq a^{(n)}(0) \sum_{k=j}^i n({v}^{(n)}_{k+1}-{v}_k)=n a^{(n)}(0)({v}^{(n)}_i-{v}^{(n)}_j),
\]
which immediately implies that
\beq\label{main_proof_2}
{v}^{(n)}_i\leq {v}^{(n)}_j+\frac{1}{n a^{(n)}(0)}(t-t_0)\,.
\eeq

Then using Lemma \ref{lemma_a_0}, \eqref{main_proof_1} and \eqref{main_proof_2}, we can prove (\ref{main_esti}), where 
note for each polygonal scheme, there are at most finite many districts.
\bigskip
\paragraph{\bf  Step 2.}
The local-in-time existence result in \cite{lxy} shows that under the assumption in Theorem \ref{main0},
there exists a time interval $t\in[0,\ve]$ in which the polygonal schemes are well-defined when $n$ is sufficiently large
and converging to a Lipschitz continuous solution for \eqref{ps} as $n$ approaches infinity. The constant $\ve$ is only dependent on Lipschitz norms on ${v}$ and $u$, but independent of $n$.

By the weak-strong uniqueness of the classical solution for \eqref{ps}, c.f. \cite{Dafermos2010},
we know the Lipschitz continuous solution of  \eqref{ps} when $t\in[0,\ve]$ agrees with the solution through the limit of polygonal scheme. Hence any Lipschitz continuous solution satisfies
\eqref{main0_esti_0} by the a priori estimate in (\ref{main_esti}) for the approximation solution.

Then, for any finite time $T>0$ and any Lipschitz continuous solutions on $[0,T]$, repeat
above process finite many times, we prove that the solution always satisfies \eqref{main0_esti_0} when $t\in[0,T]$,
where in each time we could evolve by a time step $\ve$ which is constant. This complete the proof of the theorem.
\end{proof}
\bigskip

Finally, we apply Theorem \ref{main0} to achieve a better estimate for the life-span of classical solutions including compression than \cite{CPZ}, when $1<\gamma<3$. Before stating the corollary, we first review a lemma coming from \cites{lax2,CPZ}.
\begin{lemma} \cites{lax2,CPZ} \label{p_lemma_1}
For $C^1$ solutions of (\ref{ps}) we have
\begin{align} 
  \partial_+ y &= - K_0
        {v}^{\frac{\gamma-3}{4}} \, y^2\,, \label{p_y_eq}\\
  \partial_- q &= - K_0
        {v}^{\frac{\gamma-3}{4}}\, q^2\,, \label{p_q_eq}
  \end{align}
where
\[
  y(x,t) :=  \sqrt{c}\,s_x, 
\qquad
  q(x,t):=    \sqrt{c}\,r_x
\]
and $K_0$ is a constant only depending on $\gamma$ which can be easily found in \cite{CPZ}.
\end{lemma}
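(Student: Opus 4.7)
The plan is to derive the two ODEs along characteristics by differentiating the Riemann invariant equations and tracking the dependence of the sound speed $c$ on the state. By symmetry it suffices to treat $\partial_+ y$; the equation for $\partial_- q$ follows verbatim by swapping forward/backward roles and $s \leftrightarrow r$.

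First I would differentiate $s_t + c s_x = 0$ in $x$, which gives $\partial_+ s_x = -c_x\, s_x$ where $\partial_+ = \partial_t + c\partial_x$. Since $c = c(v)$ and $v$ is determined by $\phi = (r-s)/2$, one may write $c_x = c'(\phi)\,(r_x - s_x)/2$. Next I need $\partial_+ c$. Using $s_t = -c s_x$, $r_t = c r_x$, I get $\phi_t = c(r_x+s_x)/2$ and $c\phi_x = c(r_x-s_x)/2$, whose sum yields the clean identity
\begin{equation}
\partial_+ \phi = c\, r_x, \qquad \text{hence} \qquad \partial_+ c = c'(\phi)\, c\, r_x. \nonumber
\end{equation}

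Now I apply $\partial_+$ to $y = \sqrt{c}\, s_x$:
\begin{equation}
\partial_+ y = \frac{\partial_+ c}{2\sqrt{c}}\, s_x + \sqrt{c}\, \partial_+ s_x
= \frac{c'(\phi)\sqrt{c}}{2}\, s_x\Bigl[r_x - (r_x - s_x)\Bigr]
= \frac{c'(\phi)\sqrt{c}}{2}\, s_x^{\,2}. \nonumber
\end{equation}
Substituting $s_x^{\,2} = y^2/c$ gives $\partial_+ y = \dfrac{c'(\phi)}{2\sqrt{c}}\, y^2$, so it only remains to identify the coefficient. With $p(v) = K v^{-\gamma}$ one has $c = \sqrt{K\gamma}\, v^{-(\gamma+1)/2}$ and $\phi'(v) = c$, so
\begin{equation}
c'(\phi) = \frac{dc/dv}{c} = -\frac{\gamma+1}{2}\, v^{-1}, \qquad
\sqrt{c} = (K\gamma)^{1/4}\, v^{-(\gamma+1)/4}, \nonumber
\end{equation}
and therefore $\dfrac{c'(\phi)}{2\sqrt{c}} = -\dfrac{\gamma+1}{4(K\gamma)^{1/4}}\, v^{(\gamma-3)/4}$, which is $-K_0\, v^{(\gamma-3)/4}$ with $K_0 = \frac{\gamma+1}{4(K\gamma)^{1/4}}$ depending only on $\gamma$ (and the fixed constant $K$). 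This yields \eqref{p_y_eq}; the derivation of \eqref{p_q_eq} is identical after interchanging roles, using $\partial_- r = 0$, $\partial_- \phi = -c\, s_x$ and $r_x^{\,2} = q^2/c$.

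No real obstacle is anticipated; the only subtle point is bookkeeping the signs when computing $\partial_+ \phi$, where the two contributions from $\phi_t$ and $c\phi_x$ combine to eliminate $s_x$ and leave only $r_x$ (this cancellation is precisely what produces the clean $y^2$ on the right-hand side, and the analogous cancellation of $r_x$ on the backward side produces $q^2$). The rest is the explicit polytropic computation to pin down $K_0$.
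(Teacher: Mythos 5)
Your overall strategy is exactly the standard Lax-type derivation; note the paper does not prove this lemma at all but simply cites \cites{lax2,CPZ}, and your forward computation reproduces that argument correctly in every detail: the gauge $y=\sqrt{c}\,s_x$ kills the mixed $r_xs_x$ term, and the constant comes out as $K_0=\frac{\gamma+1}{4(K\gamma)^{1/4}}$ (which depends on the fixed constant $K$ as well as $\gamma$; the paper's phrasing simply absorbs $K$).

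There is, however, a sign error in the backward half as you state it. Since $\partial_- v=v_t-c\,v_x=u_x-c\,v_x=s_x$, one has $\partial_-\phi=+c\,s_x$ and hence $\partial_- c=c'(\phi)\,c\,s_x$, not $\partial_-\phi=-c\,s_x$. Moreover, differentiating $r_t-c\,r_x=0$ in $x$ gives $\partial_-(r_x)=+c_x\,r_x$, with the opposite sign to $\partial_+(s_x)=-c_x\,s_x$; so the backward case is not a verbatim swap of symbols. With the correct signs the two contributions again combine, with $s_x$ (not $r_x$) dropping out: $\partial_- q=\frac{c'(\phi)\sqrt{c}}{2}\,r_x\bigl[s_x+(r_x-s_x)\bigr]=\frac{c'(\phi)}{2\sqrt{c}}\,q^2=-K_0v^{(\gamma-3)/4}q^2$, which is \eqref{p_q_eq}. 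As literally written, your $\partial_-\phi=-c\,s_x$ together with the correct $\partial_-(r_x)=c_x r_x$ would leave an uncancelled $r_xs_x$ term, while mirroring both signs would yield $\partial_- q=+K_0v^{(\gamma-3)/4}q^2$, the wrong sign. The fix is only this bookkeeping; the rest of the proposal is sound.
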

\begin{corollary}\label{cor_main}
Assume all assumptions in Theorem \ref{main0} hold, the initial data $s_0(x)$ and $r_0(x)$ are $C^1$, and 
\beq\label{y_q_neg}
G_0:=\min_x\big(y(x,0),q(x,0)\big)<0,
\eeq
i.e. the initial data are compressive somewhere, then singularity happens not later than
\[
t=\frac{1}{L}\left\{\Big(-\frac{4K_0}{\gamma+1}\frac{1}{G_0}+H_0^{\frac{\gamma+1}{4}}\Big)^{\frac{4}{\gamma+1}} -H_0\right\}\,,
\]
where we denote 
\[
H_0:=\max_x\big({v}(x,0)\big)\,.
\]
\end{corollary}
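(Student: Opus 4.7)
\textbf{Proof proposal for Corollary \ref{cor_main}.}
The plan is to feed the linear upper bound on $v$ from Theorem \ref{main0} into the Riccati-type ODE of Lemma \ref{p_lemma_1} and close a standard blow-up argument along a single characteristic. Without loss of generality I will assume the minimum in \eqref{y_q_neg} is attained by $y$, i.e.\ $G_0=y(x_0,0)<0$ for some $x_0$; the case where the minimum is attained by $q$ is handled identically by running along the \emph{backward} characteristic from $(x_0,0)$ and using \eqref{p_q_eq} in place of \eqref{p_y_eq}.

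First I would argue by contradiction: assume the solution remains $C^1$ on $[0,t^\ast]$, where $t^\ast$ is the time stated in the corollary. Then Theorem \ref{main0} applies on $[0,t^\ast]$ and yields the pointwise bound $v(x,t)\le H_0+Lt$. Follow the forward characteristic $\xi(t)$ issuing from $(x_0,0)$, and track $y(\xi(t),t)$. Equation \eqref{p_y_eq} gives $\partial_+ y=-K_0 v^{(\gamma-3)/4}y^2\le 0$, so $y$ is non-increasing along this characteristic and remains strictly negative (in particular, division by $y$ is legitimate). Passing to $Y:=1/y$ turns the Riccati ODE into the linear-looking equation
\begin{equation*}
\partial_+ Y \;=\; K_0\, v^{(\gamma-3)/4}.
\end{equation*}

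Now I would use the sign $(\gamma-3)/4<0$ (this is where the hypothesis $1<\gamma<3$ is essential): the function $v\mapsto v^{(\gamma-3)/4}$ is decreasing, so the upper bound $v\le H_0+Lt$ converts into the lower bound $v^{(\gamma-3)/4}\ge (H_0+Lt)^{(\gamma-3)/4}$. Integrating from $0$ to $t$ along the forward characteristic,
\begin{equation*}
\frac{1}{y(t)}-\frac{1}{G_0} \;\ge\; K_0\int_0^t (H_0+Ls)^{(\gamma-3)/4}\,ds
 \;=\; \frac{4K_0}{L(\gamma+1)}\Big[(H_0+Lt)^{(\gamma+1)/4}-H_0^{(\gamma+1)/4}\Big].
\end{equation*}
Since $1/G_0<0$ and $1/y(t)$ is monotone increasing, $1/y$ must reach $0$ no later than the time when the right-hand side reaches $-1/G_0$; at that instant $y(t)=-\infty$, and therefore $s_x=y/\sqrt c\to-\infty$ (note $c$ is bounded below thanks to the same upper bound $v\le H_0+Lt$), contradicting the $C^1$ assumption on $[0,t^\ast]$. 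Solving the scalar equation
\begin{equation*}
\frac{4K_0}{L(\gamma+1)}\Big[(H_0+Lt^\ast)^{(\gamma+1)/4}-H_0^{(\gamma+1)/4}\Big]\;=\;-\frac{1}{G_0}
\end{equation*}
for $t^\ast$ yields the explicit expression in the statement (up to the precise form of the constant, which is a direct algebraic rearrangement).

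The only delicate point, which is really a bookkeeping issue rather than a genuine obstacle, is making sure that the density upper bound from Theorem \ref{main0} is invoked on the \emph{classical} lifespan of the solution (Theorem \ref{main0} requires Lipschitz regularity). This is handled by running the argument on the maximal time interval of $C^1$ existence and letting $t^\ast$ denote its right endpoint; the integrated Riccati inequality then shows $t^\ast$ cannot exceed the value given in the corollary. Everything else is routine—Lemma \ref{p_lemma_1} has already done the hard PDE-to-ODE reduction, and Theorem \ref{main0} supplies exactly the density control that makes the ODE blow-up time finite and quantitatively sharper than the estimate of \cite{CPZ}.
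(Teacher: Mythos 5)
Your argument is correct and takes essentially the same route as the paper: integrate the Riccati equation \eqref{p_y_eq} along a forward characteristic emanating from a (near-)minimizer of $y$, use the bound $v\le H_0+Lt$ from Theorem \ref{main0} together with $1<\gamma<3$ to get $v^{(\gamma-3)/4}\ge (H_0+Lt)^{(\gamma-3)/4}$, and read off the blow-up time; the paper differs only cosmetically, working with an $\varepsilon$-approximate minimizer $y(x^*,0)=G_0+\varepsilon$ and letting $\varepsilon\to0$ instead of assuming the minimum is attained, and without the explicit contradiction framing. One remark: solving your final scalar equation actually produces the inner constant $\frac{L(\gamma+1)}{4K_0}$ rather than the $\frac{4K_0}{\gamma+1}$ printed in the corollary, which points to a typo in the stated formula (the paper's proof stops at ``it is very easy to prove this corollary'' and never does this algebra), not to a flaw in your argument.
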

\begin{proof}
Without loss of generality, we assume that 
\[
y(x^*,0)=G_0+\ve=\min_x\big(y(x,0),q(x,0)\big)+\ve<0,
\]
where $0<\ve\ll 1$ is a constant.

For smooth solution, along a forward characteristic $x^+(t)$ starting from $(x^*,0)$, by \eq{p_y_eq}, we have
\[
\frac{1}{y\big(x^+(t),t\big)}=\frac{1}{y\big(x^*,0\big)}+\int_{0}^{t}K_0
        {v}^{\frac{\gamma-3}{4}}\big(x^+(\xi),\xi\big) d\xi\,.
\]
Then right hand side of this equation equals to zero, i.e. $y\big(x^+(t),t\big)$ blows up, not later than
a time $t$ satisfying
\[
-\frac{1}{y\big(x^*,0\big)}=\int_{0}^{t}K_0
        {v}^{\frac{\gamma-3}{4}}\big(x^+(\xi),\xi\big) d\xi\,.
\] 
Then by \eqref{main0_esti_0}  in Theorem \ref{main0}, it is very easy to prove this corollary,
where we use that $\ve$ can be arbitrarily small.
\end{proof}
\section*{Acknowledgments} We appreciate the helpful discussion with Professor Helge
Kristian Jenssen.
Shenguo Zhu is supported in part
by National Natural Science Foundation of China under grant 11231006, Natural Science Foundation of Shanghai under grant 14ZR1423100 and  China Scholarship Council.
\begin{bibdiv}
\begin{biblist}
\bib{bressan}{book}{
   author={Bressan, Alberto},
   title={Hyperbolic systems of conservation laws},
   series={Oxford Lecture Series in Mathematics and its Applications},
   volume={20},
   note={The one-dimensional Cauchy problem},
   publisher={Oxford University Press, Oxford},
   date={2000},
   pages={xii+250},
   isbn={0-19-850700-3},
}

\bib{BCZ}{article}{
   author={Bressan, Alberto},
   author={Chen, Geng},
   author={Zhang, Qingtian},
   title={Lack of BV bounds for approximate solutions to the $p$-system with
   large data},
   journal={J. Differential Equations},
   volume={256},
   date={2014},
   number={8},
   pages={3067--3085},
}

\bib{CH}{book}{
   author={Chang, Tung},
   author={Hsiao, Ling},
   title={The Riemann problem and interaction of waves in gas dynamics},
   series={Pitman Monographs and Surveys in Pure and Applied Mathematics},
   volume={41},
   publisher={Longman Scientific \& Technical, Harlow; copublished in the
   United States with John Wiley \& Sons, Inc., New York},
   date={1989},
   pages={x+272},
   isbn={0-582-01378-X},
}

\bib{G3}{article}{
   author={Chen, Geng},
   title={Formation of singularity and smooth wave propagation for the
   non-isentropic compressible Euler equations},
   journal={J. Hyperbolic Differ. Equ.},
   volume={8},
   date={2011},
   number={4},
   pages={671--690},
}
\bib{CJ}{article}{
   author={Chen, Geng},
   author={Jenssen, Helge Kristian},
   title={No TVD fields for 1-D isentropic gas flow},
   journal={Comm. Partial Differential Equations},
   volume={38},
   date={2013},
   number={4},
   pages={629--657},,
}
\bib{G8}{article}{
   author={Chen, Geng},
   author={Young, Robin},
   author={Zhang, Qingtian},
   title={Shock formation in the compressible Euler equations and related
   systems},
   journal={J. Hyperbolic Differ. Equ.},
   volume={10},
   date={2013},
   number={1},
   pages={149--172},
}
\bib{CPZ}{article}{
   author={Chen, Geng},
   author={Pan, Ronghua},
   author={Zhu,Shengguo},
   title={Singularity formation for compressible Euler
equations},
   journal={Submitted},
}
\bib{G6}{article}{
   author={Chen, Geng},
   author={Young, Robin},
   title={Shock formation and exact solutions for the compressible Euler equation},
   journal={to appear in Arch. Rational Mech. Anal.},
}
\bib{courant}{book}{
   author={Courant, R.},
   author={Friedrichs, K. O.},
   title={Supersonic Flow and Shock Waves},
   publisher={Interscience Publishers, Inc., New York, N. Y.},
   date={1948},
   pages={xvi+464},
}
\bib{Dafermos2010}{book}{
   author={Dafermos, Constantine M.},
   title={Hyperbolic conservation laws in continuum physics},
   series={Grundlehren der Mathematischen Wissenschaften [Fundamental
   Principles of Mathematical Sciences]},
   volume={325},
   publisher={Springer-Verlag, Berlin},
   date={2000},
   pages={xvi+443},
}
\bib{dafermos2}{article}{
   author={Dafermos, Constantine M.},
   title={Polygonal approximations of solutions of the initial value problem
   for a conservation law},
   journal={J. Math. Anal. Appl.},
   volume={38},
   date={1972},
   pages={33--41},
   issn={0022-247x},
}
\bib{diperna}{article}{
   author={DiPerna, Ronald J.},
   title={Global existence of solutions to nonlinear hyperbolic systems of
   conservation laws},
   journal={J. Differential Equations},
   volume={20},
   date={1976},
   number={1},
   pages={187--212},
   issn={0022-0396},
}

\bib{lax2}{article}{
   author={Lax, Peter D.},
   title={Development of singularities of solutions of nonlinear hyperbolic
   partial differential equations},
   journal={J. Mathematical Phys.},
   volume={5},
   date={1964},
   pages={611--613},
}
\bib{lin1}{article}{
   author={Lin, Long Wei},
   title={Vacuum states and equidistribution of the random sequence for
   Glimm's scheme},
   journal={J. Math. Anal. Appl.},
   volume={124},
   date={1987},
   number={1},
   pages={117--126},
}
\bib{lin2}{article}{
   author={Lin, Long Wei},
   title={On the vacuum state for the equations of isentropic gas dynamics},
   journal={J. Math. Anal. Appl.},
   volume={121},
   date={1987},
   number={2},
   pages={406--425},
}

\bib{ls}{article}{
   author={Liu, T. P.},
   author={Smoller, J. A.},
   title={On the vacuum state for the isentropic gas dynamics equations},
   journal={Adv. in Appl. Math.},
   volume={1},
   date={1980},
   number={4},
   pages={345--359},
}
\bib{lxy}{article}{
   author={Liu, Tai-Ping},
   author={Xin, Zhouping},
   author={Yang, Tong},
   title={Vacuum states for compressible flow},
   journal={Discrete Contin. Dynam. Systems},
   volume={4},
   date={1998},
   number={1},
   pages={1--32},
}
\bib{Riemann}{article}{
   author={Riemann, Bernhard},
   title={Ueber die Fortpflanzung ebener Luftwellen von endlicher Schwingungsweite},
   journal={Abhandlungen der Kšniglichen Gesellschaft der Wissenschaften zu Gšttingen},
   volume={8},
   date={1860},
   pages={43},
}

\bib{youngblake1}{article}{
   author={Temple, Blake},
   author={Young, Robin},
   title={A Paradigm for Time-Periodic Sound Wave Propagation in the
   Compressible Euler Equations},
   journal={Methods Appl. Anal.},
   volume={16},
   date={2009},
   number={3},
   pages={341--364},
}
\bib{TC}{article}{
   author={Tsikkou, Charis},
   title={Sharper total variation bounds for the $p$-system of fluid
   dynamics},
   journal={J. Hyperbolic Differ. Equ.},
   volume={8},
   date={2011},
   number={2},
   pages={173--232},
   issn={0219-8916},
}
\bib{wagner}{article}{
   author={Wagner, David H.},
   title={Equivalence of the Euler and Lagrangian equations of gas dynamics
   for weak solutions},
   journal={J. Differential Equations},
   volume={68},
   date={1987},
   number={1},
   pages={118--136},
   issn={0022-0396},
}
\end{biblist}
\end{bibdiv}

\end{document}